\documentclass[a4paper, oneside,11 pt]{article}

\usepackage[latin1]{inputenc}
\usepackage[T1]{fontenc}
\usepackage[english]{babel}

\usepackage[all]{xy}
\linespread{1}
\usepackage{verbatim}
\usepackage{amsmath}
\usepackage{amsthm}
\usepackage{graphicx}
\usepackage{amssymb}
\usepackage{epstopdf}
\usepackage{url}
\usepackage{amsfonts}
\usepackage{todonotes}
\usepackage{fullpage}
%\todo{quello che voglio}
%\usepackage{vmargin}

\newcommand{\pt}{\partial}

  \newcommand{\M}{{\mathcal M}}
\newcommand{\br}{\mathbb{R}}
\newcommand{\bz}{\mathbb{Z}}

\newcommand{\bt}{\mathbb{T}}

\newcommand{\e}{\varepsilon}

\renewcommand{\)}{\right)}

\newcommand{\bb}{\mathbb}

\newtheorem{thm}{Theorem}
\newtheorem{lem}[thm]{Lemma}
\newtheorem{cor}[thm]{Corollary}
\newtheorem{prop}[thm]{Proposition}
\newtheorem{defi}[thm]{Definition}
\newtheorem{remark}[thm]{Remark}

\def\be{\begin{equation}}
\def\ee{\end{equation}}
\def\bea{\begin{eqnarray}}
\def\eea{\end{eqnarray}}
%\setmarginsrb{25mm}{15mm}{25mm}{10mm}%
             %{5mm}{15mm}{0mm}{10mm}

\newcommand{\f}{{\mathcal{F}}}
\newcommand{\E}{{\mathcal E}}
\newcommand{\U}{{\mathcal U}}
\newcommand{\rrho}{{\varrho}}

\numberwithin{thm}{section}
\numberwithin{equation}{section}

\newcommand{\Black}{\color{black}}

\author{Daniel Han-Kwan\footnote{CNRS $\&$ \'Ecole Polytechnique, Centre de Math\'ematiques Laurent Schwartz, UMR 7640. Email: daniel.han-kwan@math.polytechnique.fr}, \, Mikaela Iacobelli\footnote{Universit\`a ``La Sapienza'', Dipartimento di Matematica ``Guido Castelnuovo'', Roma.
Email: iacobelli@mat.uniroma1.it} \footnote{
\'Ecole Polytechnique, Centre de Math\'ematiques Laurent Schwartz, UMR 7640. 
}
}

\title{The quasineutral limit of the Vlasov-Poisson equation in Wasserstein metric}

\begin{document}

\maketitle

\begin{abstract}
In this work, we study the quasineutral limit of the one-dimensional Vlasov-Poisson equation for ions with massless thermalized electrons. We prove new weak-strong stability estimates in the Wasserstein metric that allow us to extend and improve previously known convergence results. In particular, we show that given a possibly unstable analytic initial profile, the formal limit holds for sequences of measure initial data converging sufficiently fast in the Wasserstein metric to this profile. This is achieved without assuming uniform analytic regularity.%for what we call highly well-prepared data.
\end{abstract}

{\bf Keywords}: Vlasov-Poisson system, Quasineutral limit, Wasserstein distance.

\section{Introduction}

In this paper we study a Vlasov-Poisson system, which is a model describing the dynamics of ions in a plasma, in the presence of massless thermalized electrons. We shall focus on the one-dimensional case and consider that the equations are set on the phase space $\bt \times \br$ (we will sometimes identify $\bt$ to $[-1/2,1/2)$ with periodic boundary conditions). The system, which we shall refer to as the Vlasov-Poisson system with \emph{massless electrons}, encodes the fact that 
electrons move very fast and quasi-instantaneously reach their local thermodynamic equilibrium.
It
reads as follows:
\begin{equation}
\label{vpme}
(VPME):= \left\{ \begin{array}{ccc}\pt_t f+v\cdot \pt_x f+ E\cdot \pt_v f=0,  \\
E=-U', \\
U''=e^U- \int_{\br} f\, dv=:e^U- \rho,\\
f\vert_{t=0}= f_0\ge0,\ \  \int_{\bt \times \br} f_0\,dx\,dv=1.
\end{array} \right.
\end{equation}
Here, as usual, $f(t,x,v)$ stands for the distribution function of the ions in the phase space $\bt \times \br$ at time $t \in \br^+$, while $U(t,x)$ and $E(t,x)$ represent the electric potential and field respectively, and $U'$ (resp. $U''$) denotes the first (resp. second) spatial derivative of $U$. In the Poisson equation,  the semi-linear term $e^U$ stands for the density of electrons, which therefore are assumed to follow a Maxwell-Boltzmann law. We refer for instance to \cite{HK} for a physical discussion on this system.

We are interested in the behavior of solutions to the (VPME) system in the so-called \emph{quasineutral limit}, i.e., when the Debye length of the plasma vanishes. 
Loosely speaking, the Debye length can be interpreted as the typical scale of variations of the electric potential.
It turns out that it is always very small compared to the typical observation length, so that the quasineutral limit is relevant from the physical point of view. As a result, the approximation which consists in considering a Debye length equal to zero is widely used in plasma physics, see for instance \cite{Chen}. This leads to the study of the limit as $\e\to0$ of the scaled system:
\begin{equation}
\label{vpme-quasi}
(VPME)_\e:= \left\{ \begin{array}{ccc}\pt_t f_\e+v\cdot \pt_x f_\e+ E_\e\cdot \pt_v f_\e=0,  \\
E_\e=-U_\e', \\
\e^2 U_\e''=e^{U_\e}- \int_{\br} f_\e\, dv=:e^{U_\e}- \rho_\e,\\
f_\e\vert_{t=0}=f_{0,\e}\ge0,\ \  \int_{\bt \times \br} f_{0,\e}\,dx\,dv=1.
\end{array} \right.
\end{equation}
The formal limit is obtained in a straightforward way by taking $\e=0$ (which corresponds to a Debye length equal to $0$):
\begin{equation}
\label{formal}
(KIE):= \left\{ \begin{array}{ccc}\pt_t f+v\cdot \pt_x f+ E\cdot \pt_v f=0,  \\
E=-U', \\
U= \log \rho,\\
f_0\ge0,\ \  \int_{\bt \times \br} f_0\,dx\,dv=1,
\end{array} \right.
\end{equation}
a system we shall call the \emph{kinetic isothermal Euler system}.

We point out that there are variants of the (VPME) system which are also worth studying, such as the linearized (VPME), in which semi-linear term in the Poisson equation is linearized (this turns out to be a standard approximation in plasma physics, see also \cite{HK10,HK,HK13,HKH}),
$$
U''= U +1 - \rho,
$$
and the Vlasov-Poisson system for electrons with fixed ions (the most studied model in the mathematical literature), in which the Poisson equation reads as follows
$$
U''= 1-\rho,
$$
which we shall refer to as the \emph{classical} Vlasov-Poisson system.
As a matter of fact, our results concerning the (VPME) system have analogous statements for the linearized (VPME) or the classical Vlasov-Poisson system.
We have made the choice to study the (VPME) system since the semi-linear term in the Poisson equation creates additional interesting difficulties.
As we shall mention in Remark \ref{rmk:VP}, our analysis applies as well, \emph{mutatis mutandis}, to these models,
and actually provides a stronger result in terms of the class of data that we are allowed to consider.

\bigskip

The justification of the limit $\e \to 0$ from \eqref{vpme-quasi} to \eqref{formal} 
is far from trivial. 
Indeed, this is known to be true only in few cases (see also  \cite{Br89,Gr95,HKH} for further insights):
when the sequence of initial data $f_{0,\e}$ enjoys uniform analytic regularity
with respect to the space variable
(as we shall describe later  in Section \ref{subsec:grenier}, this is just an adaptation of a work of Grenier \cite{Gr96} on the classical Vlasov-Poisson system); when $f_{0,\e}$ converge to a Dirac measure in velocity 
$f_0(x,v) = \rho_0(x) \delta_{v= v_0(x)}$ (see \cite{HK} and \cite{Br00,Mas,GSR}); and, following \cite{HKH},
when $f_{0,\e}$ converge to a homogeneous initial condition $\mu(v)$ which is symmetric with respect to some $\overline{v} \in \br$
and which is first increasing then decreasing.
Also, it is conjectured (see \cite{Gr99}) that this result should hold when the sequence of initial data $f_{0,\e}$ converges to some $f_0$ such that, for all $x \in \bt$, $v \mapsto f_{0}(x,v)$ satisfies a stability condition \emph{a la Penrose} \cite{Pen} (typically when $v \mapsto f_{0}(x,v)$ is increasing then decreasing). On the other hand, the limit is known to be false in general, as we will explain later.

\bigskip

In this work, we shall study this convergence issue in a Wasserstein metric. More precisely, we consider the distance
between finite (possibly signed) measures given by
$$
W_1(\mu,\nu):=\sup_{\|\varphi\|_{\text{Lip}} \leq 1}\biggl( \int \varphi\, d\mu - \int \varphi \,d\nu\biggr),
$$
where $\|\cdot\|_{\text{Lip}}$ stand for the usual Lipschitz semi-norm and which is referred to as the $1$-Wasserstein distance (see for instance \cite{Vil03}).
We recall that $W_1$ induces the weak topology on the space of Borel probability measures
with finite first moment, that we denote by $\mathcal{P}_1(\bt \times \br)$. Notice that, since $\bt$ is compact, this corresponds to measures $\mu$ with finite first moment in velocity.

As it will also be clear from our arguments, $W_1$ is particularly suited to estimate the distance between solutions 
to kinetic equations.
Indeed, for Vlasov-Poisson equations, it is very natural to consider atomic solutions (that it, measures concentrated on finitely many points)
and $W_1$ is able to control the distance between the supports,
while other classical distances (as for instance the total-variation) are too rough
for this (recall that the total-variation distance between two Dirac masses is always $2$ unless they coincide).

Before stating our convergence results, we first deal with the existence of global weak solutions in $\mathcal{P}_1(\bt \times \br)$.

\begin{thm}
\label{thm-exi}
Let $f_0$ be a probability measure in $\mathcal{P}_1(\bt \times \br)$, that is,
\begin{equation}
\label{eq:moment}
\int |v|\,df_0(x,v)<\infty.
\end{equation}
Then there exists a global weak solution to \eqref{vpme-quasi} with initial datum $f_0$.
\end{thm}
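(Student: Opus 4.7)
The approach is via regularization and compactness. First I would mollify $f_0$ to obtain a sequence of smooth probability densities $f_0^n \in C_c^\infty(\bt \times \br)$ with $f_0^n \to f_0$ narrowly and with $\int |v|\,df_0^n$ (and, by a careful choice of the mollification, also $\int |v|^2\,df_0^n$) bounded uniformly in $n$. For each $n$, I would construct a global classical solution $(f_\e^n, U_\e^n)$ by a Cauchy--Lipschitz fixed-point argument on the characteristic flow, extended globally thanks to energy conservation: given any smooth source $\rho$, the semi-linear elliptic equation $\e^2 U'' = e^U - \rho$ admits a unique $H^1(\bt)$ solution, obtained as the minimizer of the strictly convex functional
\[
\mc{F}_\rho[U] := \int_\bt \Bigl(\tfrac{\e^2}{2}|U'|^2 + e^U\Bigr)\,dx - \int_\bt U\,d\rho,
\]
which is smooth whenever $\rho$ is smooth, producing a smooth field $E=-U'$ that transports $f_0^n$.

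The second step is to derive $n$-uniform estimates. Mass is conserved, so $f_\e^n(t,\cdot) \in \mc{P}(\bt\times\br)$. Integrating the Poisson equation over $\bt$ yields the identity $\int_\bt e^{U_\e^n}\,dx = 1$, and a direct computation shows that
\[
\mc{H}[f_\e^n,U_\e^n] := \int_{\bt\times\br}\frac{|v|^2}{2}f_\e^n\,dx\,dv + \frac{\e^2}{2}\int_\bt |(U_\e^n)'|^2\,dx + \int_\bt (U_\e^n - 1)e^{U_\e^n}\,dx
\]
is conserved in time. Since $(U-1)e^U \ge -1$ pointwise, the first two contributions are controlled by the initial energy, hence uniformly bounded in $n$. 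This gives a uniform bound on the kinetic energy $\int |v|^2\,df_\e^n$, hence on the first velocity moment by Cauchy--Schwarz, providing tightness of $\{f_\e^n(t,\cdot)\}$ in $\mc{P}_1(\bt\times\br)$; combined with $\int e^{U_\e^n}=1$ and $\e\|(U_\e^n)'\|_{L^2}\le C$, one-dimensional Sobolev embeddings yield uniform bounds on $\|U_\e^n\|_{L^\infty}$ and $\|E_\e^n\|_{L^\infty}$ (possibly depending on $\e$, which is fixed throughout this argument).

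With these bounds the characteristic trajectories $(X^n,V^n)$ satisfy $|V^n(t)-V^n(s)| \le \|E_\e^n\|_\infty\,|t-s|$ and, after integration, $\int|X^n(t)-X^n(s)|\,df_0^n \le C|t-s|$, which translates into $W_1$-equicontinuity in time of the curve $t \mapsto f_\e^n(t,\cdot)$. An Ascoli--Arzel\`a argument in $C([0,T];\mc{P}_1(\bt\times\br))$ extracts a subsequential limit $f_\e$. The main obstacle, and the core of the proof, is to pass to the limit in the nonlinear term $E_\e^n f_\e^n$. This requires stability of the solution map $\rho \mapsto U[\rho]$ of the semi-linear Poisson equation under narrow convergence of the source: uniform strict convexity of $\mc{F}_\rho$ in $U$, together with the identity $\int e^U = 1$, yields that $\rho_\e^n \rightharpoonup \rho_\e$ implies strong $H^1(\bt)$ convergence of $U_\e^n$, and hence $L^2$-strong convergence of $E_\e^n$. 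Interpolating with the uniform $L^\infty$ bound, $E_\e^n \to E_\e$ strongly in every $L^p(\bt)$, which suffices to pass to the limit in the weak formulation of the Vlasov equation and identify $f_\e$ as the sought global weak solution.
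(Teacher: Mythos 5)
Your proposal takes a genuinely different route from the paper (Eulerian mollification and compactness, vs.\ the paper's Lagrangian particle approximation after Hauray), but it contains a real gap at the outset. You ask for a mollified sequence $f_0^n \to f_0$ narrowly with $\int |v|^2\,df_0^n$ bounded uniformly in $n$, and then lean on energy conservation to deduce uniform bounds on the kinetic energy, on $\e\|(U_\e^n)'\|_{L^2}$, and ultimately on $\|E_\e^n\|_{L^\infty}$ and the $W_1$-equicontinuity of the flow. But the hypothesis is only $f_0 \in \mathcal{P}_1$, i.e.\ finite \emph{first} moment in velocity; the second moment may be infinite. If $\int |v|^2\,df_0 = +\infty$, no such mollified sequence can exist: narrow convergence with uniformly bounded second moments would force $\int |v|^2\,df_0 \le \liminf_n \int |v|^2\,df_0^n < \infty$ by lower semicontinuity. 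So the entire chain from energy conservation to the $L^\infty$ bound on the field collapses for a general $\mathcal{P}_1$ datum.

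This is not a technicality: it is exactly what the paper's Lagrangian approach is built to avoid. The observation you are missing is that in one spatial dimension, for fixed $\e$, the electric field is $L^\infty$-bounded by an $\e$-dependent constant \emph{independently of the density and of any energy bound}: writing $E_\e = \bar E_\e + \widehat E_\e$, the singular part satisfies $\|\bar E_\e\|_\infty \le \tfrac{1}{2\e^2}\|\rho_\e\|_{L^1} = \tfrac{1}{2\e^2}$ from the explicit kernel formula, and $\widehat E_\e$ is bounded by the variational argument of Lemma~\ref{lem:hatU}, which only uses $\int_{\bt} e^{U_\e}\,dx = 1$. With this a priori bound, characteristic velocities grow at most linearly in time, the first moment propagates, and the $W_1$-equicontinuity needed for Ascoli--Arzel\`a follows with no second moment at all. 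The paper carries this out directly on the particle flow, using Filippov's theorem for the singular characteristic ODE and the quantitative stability estimate \eqref{eq:continuity E} to pass to the limit in the nonlinear field term; your final compactness step (strong $L^p$ convergence of $E_\e^n$ from strict convexity of the Poisson functional) is a plausible alternative to that, but it too needs the uniform $L^\infty$ bound on $U_\e^n$ sourced from Lemma~\ref{lem:hatU} rather than from the unavailable energy conservation.
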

The analogous result for the classical Vlasov-Poisson equation was proved by Zheng and Majda \cite{ZM}, and more recently by Hauray \cite{HauX} with a new proof.

We shall prove Theorem~\ref{thm-exi} by combining the method introduced by Hauray (see \cite{HauX})
with new stability estimates for the massless electron system.

\bigskip

Roughly speaking,
the main results of this paper are the following: if we consider initial data for \eqref{vpme-quasi} of the form 
$f_{0,\e} = g_{0,\e} + h_{0,\e}$ with $g_{0,\e}$ analytic (or equal to a finite sum of Dirac masses in velocity, with analytic moments)
and $h_{0,\e}$ converging very fast to $0$ in the $W_1$ distance, then the solution starting from $f_{0,\e}$
converges to the solution of \eqref{formal} with initial condition $g_0 := \lim_{\e \to 0} g_{0,\e}$.
This means that small perturbations in the $W_1$ distance do not affect the quasineutral limit.
Notice that the fact that the size of the perturbation has to be small only in $W_1$
means that $h_{0,\e}$ could be arbitrarily large in any $L^p$ norm.

To state our main results, we first introduce some notation.
The following analytic norm has been used by Grenier \cite{Gr96}
to show convergence results for the quasineutral limit
in the context of the classical Vlasov-Poisson system.

Such a norm is useful to study the quasineutral limit since the formal limit is false in general in Sobolev regularity (see Proposition \ref{thm3} and the discussion below); one can also see that the formal limit equation exhibits a loss of derivative (in the force term), which can be overcome with analytic regularity. 

\begin{defi}
\label{def:norm}
Given $\delta>0$ and a function $g:\bt \to \mathbb R$, we define
$$
\| g \|_{B_\delta} := \sum_{k \in \bz} | \widehat{g}(k) | \delta^{|k|},
$$
where $\widehat{g}(k)$ is the $k$-th Fourier coefficient of $g$. We also define $B_\delta$ as the space of functions $g$ such that $\| g \|_{B_\delta}<+\infty$.
\end{defi}

\begin{thm}
\label{thm1}
Consider a sequence of non-negative initial data in $\mathcal{P}_1(\bt \times \br)$ for \eqref{vpme-quasi} of the form
$$
f_{0,\e} = g_{0,\e} + h_{0,\e},
$$
where $(g_{0,\e})$ is a sequence of continuous functions
satisfying
$$
\sup_{\e\in (0,1)}\sup_{v \in \br} \, (1+v^2) \| g_{0,\e} (\cdot,v)\|_{B_{\delta_0}}  \leq C,
$$
$$
\sup_{\e\in (0,1)} \left\|  \int_\br g_{0,\e}(\cdot,v) \, dv  -1 \right\|_{B_{\delta_0}} < \eta,
%\inf_{\e\in (0,1)}\inf_{x \in \bt} \int_\br g_{0,\e}(x,v) \, dv  >0,
$$
for some $\delta_0,C,\eta>0$, with $\eta$ small enough, and admitting a limit $g_0$ in the sense of distributions.
%satisfy the assumptions of Theorem~\ref{grenier} 

There exists a function $\varphi: \br^+ \to \br^+$, with $\lim_{\e \to 0^+} \varphi(\e) =0$, such that the following holds.

Assume that $(h_{0,\e})$ is a sequence of measures with finite first moment, satisfying 
$$
\forall \e>0, \quad W_1(h_{0,\e},0) \leq \varphi(\e).
$$

%{\color{blue}
%$$
%\lim_{\e \ra 0}\frac{W_1(h_{0,\e},0)}{\frac{1}{\e} \exp \left( \frac{\lambda}{\e^3}  \exp \frac{15}{2 \e^2}\right)} = 0
%$$
%}
%for some {\color{blue}$\lambda<0$}. 

Then there exist $T>0$ and $g(t)$ a weak solution on $[0,T]$ of \eqref{formal} with initial condition $g_0 = \lim_{\e \to 0} g_{0,\e}$, such that, for any global weak solution $f_\e(t)$ of \eqref{vpme-quasi} with initial condition $f_{0,\e}$,
$$
\sup_{t \in [0,T]}  W_1(f_\e(t), g(t)) \to_{\e \to 0} 0.
$$

We can explicitly take $\varphi(\e)= \frac{1}{\e} \exp \left( \frac{\lambda}{\e^3}  \exp \frac{15}{2 \e^2}\right)$
for some $\lambda<0$.

\end{thm}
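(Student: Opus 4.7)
\textbf{Proof strategy for Theorem~\ref{thm1}.} The plan is to run a weak-strong stability argument in $W_1$ against a well-behaved analytic reference solution. First, I would apply the Grenier-type scheme recalled in Section~\ref{subsec:grenier} to the analytic data $g_{0,\e}$, producing a one-parameter family $g_\e$ of solutions to \eqref{vpme-quasi} on a common time interval $[0,T]$, with uniform bounds
$$\sup_{t \in [0,T]} \sup_{v \in \br}\, (1+v^2)\, \|g_\e(t,\cdot,v)\|_{B_{\delta}} \leq C',$$
for some $0 < \delta < \delta_0$. The smallness of $\eta$ enters here to ensure that the analytic life-span $T$ is $\e$-independent. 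Such bounds imply in particular that the associated field $E_\e^g = -(U_\e^g)'$ is uniformly Lipschitz in $x$, and that, up to extraction, $g_\e \to g$ in $C([0,T]; W_1)$, where $g$ is a weak solution of \eqref{formal} with initial datum $g_0$.

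The core of the proof is a weak-strong stability estimate between an arbitrary global weak solution $f_\e$ of \eqref{vpme-quasi} (built by Theorem~\ref{thm-exi}) and the analytic reference $g_\e$. Fixing an optimal coupling $\pi_0$ of $f_{0,\e}$ and $g_{0,\e}$, I would monitor
$$Q_\e(t) := \int \bigl( |X_f(t)-X_g(t)|^2 + |V_f(t)-V_g(t)|^2 \bigr)\, d\pi_0,$$
where $(X_f,V_f)$ and $(X_g,V_g)$ are the characteristic flows of $f_\e$ and $g_\e$ (the former understood via regularization when $f_\e$ is atomic). Differentiating in time and using the Lipschitz bound on $E_\e^g$, the new ingredient needed is a \emph{semi-linear Loeper estimate} controlling $\|E_\e^f - E_\e^g\|_{L^\infty(\bt)}$ by $W_1(\rho_\e^f,\rho_\e^g)\lesssim \sqrt{Q_\e(t)}$. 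The way to obtain it is to write the equation for $W := U_\e^f - U_\e^g$,
$$\e^2 W'' = (e^{U_\e^f}-e^{U_\e^g}) - (\rho_\e^f - \rho_\e^g),$$
test against $W$ while exploiting the monotonicity $(e^a-e^b)(a-b)\geq 0$, and apply Kantorovich duality to the term $\int (\rho_\e^f-\rho_\e^g)\,W$ using that $\bt$ is one-dimensional (so $\|W\|_{\mathrm{Lip}}$ can be upgraded from $L^2$ control of $W'$ via a Gagliardo--Nirenberg/Sobolev inequality). This yields a closed differential inequality of the form
$$\dot{Q}_\e(t) \leq \Phi(\e)\, Q_\e(t) + \text{small}_\e,$$
with $\Phi(\e)$ growing like a power of $1/\e$ inherited from the $\e^{-2}$ in the Poisson equation and from a uniform $L^\infty$ bound on the potentials.

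Gr\"onwall on $[0,T]$ gives $Q_\e(T) \leq Q_\e(0)\exp\bigl(\Phi(\e)T\bigr)$, and since $Q_\e(0)\lesssim W_1(f_{0,\e},g_{0,\e})^2\lesssim \varphi(\e)^2$, choosing $\varphi(\e)$ smaller than $\exp(-\Phi(\e)T)$ forces $\sup_{t\in[0,T]}W_1(f_\e(t),g_\e(t))\to 0$; combined with $W_1(g_\e(t),g(t))\to 0$ from the first step, the triangle inequality concludes. The double-exponential shape of the admissible $\varphi(\e)$ reflects the fact that an a priori $L^\infty$ bound on $U_\e$, needed to dominate the semi-linear term $e^{U_\e}$, itself grows exponentially in $1/\e$, and this bound is then exponentiated once more through the Gr\"onwall step. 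The main obstacle I anticipate is precisely this semi-linear Loeper estimate: the exponential nonlinearity forbids a direct Coulombian computation, and one must simultaneously control $\|U_\e^f\|_\infty$, $\|U_\e^g\|_\infty$ uniformly in $\e$ for data as singular as atomic measures, which requires a careful bootstrap based on the positivity of $e^U$ and the compactness of $\bt$. A secondary technical issue is the rigorous definition of characteristics for singular $f_\e$, overcome by regularizing the data and passing to the limit with the stability estimate itself.
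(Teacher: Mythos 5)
Your overall architecture matches the paper's: build a family $g_\e$ of analytic solutions via Grenier's fluid decomposition, prove $\sup_{t\in[0,T]}W_1(g_\e(t),g(t))\to 0$, prove a weak-strong $W_1$-stability estimate for \eqref{vpme-quasi} to control $W_1(f_\e(t),g_\e(t))$ via Gr\"onwall, choose $\varphi$ to beat the Gr\"onwall constant, and conclude by triangle inequality. This is exactly the structure of Section~\ref{sec:proofs}.

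However, your proposed proof of the weak-strong estimate (the analogue of Theorem~\ref{thm4}) has a genuine gap. You want to test $\e^2 W''=(e^{U^f}-e^{U^g})-(\rho^f-\rho^g)$ against $W:=U^f-U^g$, use monotonicity of the exponential, and then close the Kantorovich-duality term $\int(\rho^f-\rho^g)W$ by ``upgrading $L^2$ control of $W'$ to $\|W\|_{\mathrm{Lip}}$ via Gagliardo--Nirenberg.'' This does not work. Testing against $W$ yields only $\e^2\|W'\|_{L^2}^2\le\|W\|_{\mathrm{Lip}}\,W_1(\rho^f,\rho^g)$, and to extract a Lipschitz bound on $W$ that itself decays with $W_1(\rho^f,\rho^g)$ you would need $L^2$ control of $W''$; but $W''=\e^{-2}[(e^{U^f}-e^{U^g})-(\rho^f-\rho^g)]$ contains $\rho^f-\rho^g$, which is merely a signed measure when $f_\e$ is atomic (recall the whole point of the statement is that $h_{0,\e}$ need only be a measure). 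So the bootstrap gives at best a uniform bound $\|W'\|_\infty\lesssim\e^{-2}$, not one scaling with the transport distance, and the resulting differential inequality is sublinear and does not close in the form you write. Moreover your target estimate, $\|E_\e^f-E_\e^g\|_{L^\infty}\lesssim W_1(\rho^f,\rho^g)$, is in fact false in general for this problem.

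The paper circumvents this by two structural ideas you are missing. First, it \emph{splits} the potential as $U_\e=\bar U_\e+\widehat U_\e$, with $\e^2\bar U_\e''=1-\rho_\e$ (classical part) and $\e^2\widehat U_\e''=e^{\bar U_\e+\widehat U_\e}-1$ (semi-linear residual). The classical part $\bar U_\e$ has the explicit kernel $W(x)=(x^2-|x|)/2$, so one can directly apply Hauray's one-dimensional $W_1$-stability estimate for $\bar\E^1-\bar\E^2$. Crucially, the equation for $\widehat U_\e$ contains no $\rho_\e$ at all, so $\widehat U_\e,\widehat U_\e',\widehat U_\e''$ are bounded \emph{uniformly in the data} (Lemma~\ref{lem:hatU}), which is exactly the regularity you cannot reach for the full $W$. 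Second, testing the \emph{difference of the semi-linear equations} against $\widehat\U^1-\widehat\U^2$ yields only an $L^2$ bound on $\widehat\E^1-\widehat\E^2$ (see \eqref{eq:continuity E}), not an $L^\infty$ one; the paper converts this into a usable bound through the ``weak-strong'' mechanism you state but do not implement, namely rewriting $\int_0^1|\widehat\E^1(X_t^1)-\widehat\E^2(X_t^1)|\,ds=\int_{\bt}|\widehat\E^1-\widehat\E^2|\,\rho^1\,dx\le\|\rho^1\|_\infty\|\widehat\E^1-\widehat\E^2\|_{L^2}$, using that only the \emph{strong} solution's density is required to be bounded. A final point: the paper also performs the rescaling $\f_\e(t,x,v)=\frac1\e f_\e(\e t,x,v/\e)$ so that $\e$ survives only in the exponent of the Poisson nonlinearity, which is what keeps the constants tractable and produces the specific constants $15/(2\e^2)$, etc. Your attribution of the double exponential to a growing $L^\infty$ bound on $U_\e$ is also slightly off: that bound is uniform (Lemma~\ref{lem:hatU}); the inner exponential comes from the lower bound $\e^{-2}e^{-5/\e^2}$ on the derivative of $e^{(\cdot)/\e^2}$ in the convexity step, and the outer one from Gr\"onwall.
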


We now state an analogous result for initial data consisting of a finite sum of Dirac masses in velocity:
\begin{thm}
\label{thm2}
Let $N\geq 1$ and consider a sequence of non-negative initial data in $\mathcal{P}_1(\bt \times \br)$ for \eqref{vpme-quasi} of the form
\begin{align*}
f_{0,\e} &= g_{0,\e} + h_{0,\e}, \\
g_{0,\e}(x,v) &= \sum_{i=1}^N \rho_{0,\e}^i(x) \delta_{v=v_{0,\e}^i(x)},
\end{align*}
where the $(\rho_{0,\e}^i, \, v_{0,\e}^i)$ is a sequence of analytic functions
satisfying
$$
\sup_{\e\in (0,1)}\sup_{i \in \{ 1, \cdots , N\}} \| \rho^i_{0,\e}\|_{B_{\delta_0}} +  \| v^i_{0,\e}\|_{B_{\delta_0}}   \leq C,
$$
$$
\sup_{\e\in (0,1)} \left\|  \sum_{i=1}^N  \rho^i_{0,\e}   -1 \right\|_{B_{\delta_0}} < \eta
%\inf_{\e\in (0,1)}\inf_{x \in \bt} \int_\br g_{0,\e}(x,v) \, dv  >0,
$$
for some $\delta_0,C,\eta>0$, with $\eta$ small enough, 
%$$
%\inf_{\e\in (0,1)}\inf_{x \in \bt} \sum_{i \in \{ 1, \cdots , N\}} \rho^i_{0,\e}(x,v)  >0,
%$$
and admitting limits $(\rho_0^i, v_0^i)$ in the sense of distributions.

There exists a function $\varphi: \br^+ \to \br^+$, with $\lim_{\e \to 0^+} \varphi(\e) =0$, such that the following holds.

Assume that $(h_{0,\e})$ is a sequence of measures with finite first moment, satisfying 
$$
\forall \e>0, \quad W_1(h_{0,\e},0) \leq \varphi(\e).
$$

%Also, $(h_{0,\e})$ is a sequence of measures with finite first moment, satisfying %\todo{D: if we have an existence theory in measures, we could take measures here} functions satisfying
%{\color{blue}
%$$
%\lim_{\e \ra 0}\frac{W_1(h_{0,\e},0)}{\frac{1}{\e} \exp \left( \frac{\lambda}{\e^3}  \exp \frac{15}{2 \e^2}\right)} = 0
%$$
%}
%for some {\color{blue}$\lambda<0$.} 

Then there exist $T>0$, such that, for any global weak solution $f_\e(t)$ of \eqref{vpme-quasi} with initial condition $f_{0,\e}$,
$$
\sup_{t \in [0,T]}  W_1(f_\e(t), g(t)) \to_{\e \to 0} 0,
$$
where
$$
g(t,x,v) = \sum_{i=1}^N \rho^i(t,x) \delta_{v=v^i(t,x)},
$$
and $(\rho^i, v^i)$ satisfy the multi fluid isothermal system on $[0,T]$
\begin{equation}
\left\{ \begin{array}{ccc}\pt_t \rho^i+ \pt_x (\rho^i v^i)=0,  \\
\pt_t v^i + v^i \pt_x v^i = E, \\
E=-U', \\
U= \log\left(  \sum_{i=1}^N  \rho^i \right),\\
\rho^i\vert_{t=0}= \rho_{0}^i, v^i\vert_{t=0}= v_{0}^i.
\end{array} \right.
\end{equation}

We can explicitly take $\varphi(\e)= \frac{1}{\e} \exp \left( \frac{\lambda}{\e^3}  \exp \frac{15}{2 \e^2}\right)$
for some $\lambda<0$.

\end{thm}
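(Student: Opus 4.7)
The plan is to exploit the monokinetic structure to replace $g_{0,\e}$ by the exact measure-valued solution of $(VPME)_\e$ it generates, and then combine a Grenier-type analytic well-posedness result with a Wasserstein weak-strong stability estimate. Observe first that
$$
\tilde g_\e(t,x,v) := \sum_{i=1}^N \tilde \rho^i_\e(t,x)\,\delta_{v = \tilde v^i_\e(t,x)}
$$
is an exact weak solution of $(VPME)_\e$ as soon as the $N$ pressureless components $(\tilde\rho^i_\e,\tilde v^i_\e)$ satisfy the multi-fluid system obtained by keeping the full semilinear Poisson equation $\e^2 \tilde U''_\e = e^{\tilde U_\e} - \sum_j \tilde\rho^j_\e$ in place of the limit relation $U = \log \rho$. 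Thus no hierarchical (BKW) expansion is needed to build the reference solution.

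The first step is to establish that this $\e$-dependent multi-fluid system is solvable in analytic regularity on a time interval $[0,T]$ \emph{independent} of $\e$. Following the Cauchy--Kovalevskaya-type scheme used earlier in the paper, I would work in the scale $B_{\delta(t)}$ with $\delta(t)$ decreasing linearly in time. The smallness of $\eta$ serves two purposes: it ensures that the semilinear operator $\e^2\partial_x^2 - e^U$ can be inverted in $B_\delta$ with bounds uniform in $\e$ (by linearising around $U \equiv 0$, where the linear part $\e^2\partial_x^2 - 1$ has a spectral gap, and treating the nonlinearity as a small perturbation), and it keeps the resulting fluid system symmetric-hyperbolic so that its analytic existence time is bounded below uniformly in $\e$. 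Passing to the limit in the analytic estimate yields the solution $(\rho^i,v^i)$ of the multi-fluid isothermal system and, in particular, $W_1(\tilde g_\e(t),g(t))\to 0$ on $[0,T]$.

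The core step is the weak-strong stability estimate comparing any global weak solution $f_\e$ to the reference $\tilde g_\e$. Writing the characteristics of $f_\e$ and of $\tilde g_\e$ on a common probability space and coupling them via the optimal $W_1$-plan between $f_{0,\e}$ and $g_{0,\e}$, differentiation along the two flows produces
$$
\frac{d}{dt} W_1(f_\e(t),\tilde g_\e(t)) \leq \|\tilde E_\e(t)\|_{\text{Lip}}\,W_1(f_\e(t),\tilde g_\e(t)) + \|E_\e(t) - \tilde E_\e(t)\|_{L^\infty},
$$
where the Lipschitz norm of $\tilde E_\e$ is controlled by $\|\tilde E_\e\|_{B_\delta}$ after inversion of the semilinear Poisson equation in analytic regularity, and the field difference is closed back on $W_1(f_\e,\tilde g_\e)$ via a stability estimate for the $e^U$-Poisson equation (the convexity of $U\mapsto e^U$ entering exactly as in the proof of Theorem~\ref{thm-exi}). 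A Gronwall argument then yields a constant of size $\exp\!\bigl(C\e^{-3}\exp(15/(2\e^2))\bigr)$, which is precisely the rate that the stated $\varphi(\e)$ is designed to beat.

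The main obstacle is to track this $\e$-dependence sharply: the factor $\e^{-2}$ comes from inverting $\e^2\partial_x^2 - e^U$ in the analytic scale, while the nested exponential reflects the loss of one derivative in the force combined with the shrinkage of the analytic radius in the Grenier argument. Once this is under control, the argument parallels the one for Theorem~\ref{thm1} \emph{mutatis mutandis}, and the conclusion follows from the triangle inequality
$$
W_1(f_\e(t),g(t)) \leq W_1(f_\e(t),\tilde g_\e(t)) + W_1(\tilde g_\e(t),g(t)),
$$
where the first term is bounded by $\varphi(\e)\,\exp\!\bigl(C\e^{-3}\exp(15/(2\e^2))\bigr) \to 0$ by the explicit choice of $\varphi$, and the second term vanishes by the analytic consistency established in the first step.
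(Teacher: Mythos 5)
Your proposal is correct and follows essentially the same route as the paper: construct the $\e$-dependent monokinetic reference solution $\tilde g_\e$ from the multi-fluid Euler--Poisson system with the full semilinear Poisson equation, establish a Cauchy--Kovalevskaya analytic well-posedness result on a uniform time interval (the paper's Proposition~\ref{grenier2} and Corollary~\ref{cor:2.1}), apply the Wasserstein weak-strong stability estimate (the paper's Theorem~\ref{thm4}), and conclude by the triangle inequality with the stated choice of $\varphi$. The one place where your sketch is imprecise is the intermediate inequality $\frac{d}{dt}W_1 \leq \|\tilde E_\e\|_{\text{Lip}} W_1 + \|E_\e - \tilde E_\e\|_{L^\infty}$: the singular part $\bar E_\e$ of the field is a convolution against $W'$, which has a jump, so $\|\bar E_\e - \tilde{\bar E}_\e\|_{L^\infty}$ is not controlled by $W_1$ alone; the paper instead runs the Lagrangian coupling and uses Hauray's estimate $\int_0^1|\bar E^1_t(X^1_t)-\bar E^2_t(X^2_t)|\,ds\le 8\|\tilde\rho_\e\|_\infty\int_0^1|Z^1_t-Z^2_t|\,ds$, and for the regular part $\widehat E$ uses an $L^2$ bound combined with Cauchy--Schwarz and $\|\tilde\rho_\e\|_\infty$ --- this is precisely where the weak-strong structure (the $L^\infty$ bound on the density of the analytic reference solution) enters and closes the Gronwall argument. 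Also, the convexity argument you cite lives in the proof of Theorem~\ref{thm4} rather than Theorem~\ref{thm-exi}. None of this changes the outcome, and your explicit Gronwall constant and choice of $\varphi$ match the paper's.
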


\begin{remark}
\label{rmk:VP}
It is worth mentioning that the previous convergence results can be slightly improved when 
dealing with the classical Vlasov-Poisson equation.
Indeed, thanks to Remark \ref{rmk:VP ws}, 
the analogue of Theorems \ref{thm1} and \ref{thm2} holds for a larger class of initial data.
In fact, it is possible to take
$$\varphi(\e) = \frac{1}{\e} \exp \left( \frac{\lambda}{\e}\right)$$
for some $\lambda<0$.
\end{remark}

In the following, we shall say that a function $\varphi$ is admissible if it can be chosen in the statements of Theorems \ref{thm1} and \ref{thm2}.

The interest of these results is the following: they prove that it is possible to justify the quasineutral limit without making analytic regularity or stability assumption. The price to pay is that the constants involved in the explicit functions $\varphi$ above are extremely small, so that we are very close to the analytic regime. %Thus, we do not pretend that our theorems are relevant from the physical point of view. 
We see the ``double exponential'' as an {upper bound} on all admissible $\varphi$,\footnote{Notice that,  as observed in Remark \ref{rmk:VP}, the upper bound on $\varphi$ for the classical Vlasov-Poisson system is only exponential.} and hope our study will be a first step towards refined bounds. 

On the other hand one should have in mind the following negative result, which roughly means that the functions $\varphi(\e)= \e^s$, for any $s>0$ are not admissible (this therefore yields a {lower bound} on admissible functions); this is the consequence of \emph{instability} mechanisms described in \cite{Gr99} and \cite{HKH}.

\begin{prop}
\label{thm3}
There exist smooth homogeneous equilibria $\mu(v)$ such that the following holds.
For any $N>0$ and $s>0$, there exists a 
sequence of non-negative initial data $(f_{0,\e})$ such that 
$$
\| f_{\e,0}- \mu\|_{W^{s,1}_{x,v}} \leq \e^N,
$$
and denoting by $(f_\e)$ the sequence of solutions
to \eqref{vpme-quasi} with initial data $(f_{0,\e})$, for $\alpha \in [0,1)$, the following holds:
$$
\liminf_{\e \rightarrow 0} \sup_{t \in [0,\e^\alpha]} W_1(f_\e(t), \mu) >0.
$$

\end{prop}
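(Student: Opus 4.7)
The strategy is to exhibit a nonlinear instability of a small perturbation of a Penrose-unstable homogeneous equilibrium, following the scheme of \cite{Gr99,HKH}. The crucial observation is that, for a suitable $\mu$, the linearized (VPME)$_\e$ operator around $\mu$ admits unstable eigenmodes at spatial frequency $k_\e \sim 1/\e$ with linear growth rate $\gamma_\e \sim 1/\e$. Consequently, an initial perturbation of size $\e^N$ grows to order one in a time of order $\e\log(1/\e)$, which is much shorter than $\e^\alpha$ for any $\alpha<1$ as $\e\to 0^+$.

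I would first select a smooth, non-negative, unit-mass profile $\mu(v)$---for instance a symmetric two-bump profile---such that, for some $K>0$, the dispersion relation obtained by linearizing (VPME)$_\e$ around $\mu$ at spatial frequency $k_\e := 2\pi\lfloor K/(2\pi\e)\rfloor \sim K/\e$, namely
\be
1 + \e^2 k_\e^2 \;=\; \int_{\br} \frac{\mu'(v)}{v - i\lambda/k_\e}\,dv,
\ee
admits a complex root $\lambda_\e$ with $\mathrm{Re}(\lambda_\e) \ge \gamma_0/\e$ for some $\gamma_0>0$ uniform in small $\e$ (the rescaled root $\Lambda_* := \e\lambda_\e$ being determined, in the limit, by $1+K^2 = \int \mu'(v)/(v-i\Lambda/K)\,dv$, which has a solution with positive real part by the Penrose-unstable choice of $\mu$ and $K$). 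For given parameters $N,s>0$, I would then construct initial data $f_{0,\e} := \mu + \e^M \psi_\e$, where $\psi_\e(x,v) := \mathrm{Re}\bigl(e^{i k_\e x}\Phi_\e(v)\bigr)\chi(v)$ is a velocity-localized version of the associated unstable eigenmode (with $\chi$ a smooth compactly supported cutoff), and $M := N+s+1$, so that $\|f_{0,\e}-\mu\|_{W^{s,1}} \lesssim k_\e^s\,\e^M \sim \e^{M-s} \le \e^N$ and $f_{0,\e}\ge 0$ for $\e$ small.

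The core step is a nonlinear bootstrap on the window $[0,T_\e]$ with $T_\e := C_0 \e \log(1/\e)$ for $C_0 = C_0(M,\gamma_0)$ sufficiently large. Writing $f_\e - \mu = \e^M e^{t L_\e}\psi_\e + R_\e$, where $L_\e$ denotes the linearized generator around $\mu$, a Duhamel/Gr\"onwall argument---handling the semilinear term $e^{U_\e}-1-U_\e$ via its Taylor expansion---shows that the remainder $R_\e$ stays quadratically smaller than the principal linear part $\e^M e^{t L_\e}\psi_\e$ as long as the latter stays below a suitable amplitude threshold. At $t=T_\e$, the amplified high-frequency unstable mode together with the low-frequency content it generates through its quadratic self-interaction combine to produce an order-one deviation of $f_\e(T_\e)$ from $\mu$ in a topology controlling the Wasserstein distance, yielding $W_1(f_\e(T_\e),\mu) \ge c_0 > 0$. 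Since $T_\e = O(\e\log(1/\e)) \le \e^\alpha$ for any $\alpha<1$ and $\e$ small, the $\liminf$ assertion follows.

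The main technical obstacle is the nonlinear bootstrap itself: it must be carried out on a window of length $\sim \e\log(1/\e)$ at the singular frequency $k_\e\sim 1/\e$, and one must simultaneously control the direct exponential growth of the linear unstable mode and the nonlinear transfer of energy to the low spatial frequencies, where $W_1$ is most sensitive. The semilinear character of the Poisson equation in (VPME)$_\e$ also has to be absorbed into the remainder estimates. This step is the massless-electron analogue of the nonlinear instability constructions of \cite{Gr99,HKH} for the classical Vlasov--Poisson system, and I expect it to go through by adapting the same multi-scale energy/bootstrap scheme.
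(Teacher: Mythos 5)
Your proposal re-derives the nonlinear instability for (VPME)$_\e$ from scratch, following the Grenier/Han-Kwan--Hauray program: select a Penrose-unstable $\mu$, locate an unstable eigenmode at spatial frequency $k_\e\sim 1/\e$ with growth rate $\sim 1/\e$, perturb by $\e^M$ times that mode, and run a nonlinear bootstrap on a time window $T_\e\sim\e\log(1/\e)$. The paper does none of this. Instead, it invokes as a black box the instability theorem from \cite{HKH} (stated in the paper as Theorem \ref{thmGrenier-revisited}, adapted to (VPME)): for a smooth positive $\mu$ satisfying the Penrose criterion and the $\delta$-condition, there are initial data within $\e^N$ of $\mu$ in $W^{s,1}$ such that
$$
\liminf_{\e\to 0}\sup_{t\in[0,\e^\alpha]}\|f_\e(t)-\mu\|_{W^{r,1}_{x,v}}>0\qquad\text{for every }r\in\bz.
$$
It then converts this negative-Sobolev lower bound into a $W_1$ lower bound by pure duality: since $W^{2,1}(\bt\times\br)$ embeds continuously into Lipschitz functions, any $\varphi$ with $\|\varphi\|_{W^{2,1}}\le C$ is a valid $1$-Lipschitz test function, so $W_1(f_\e,\mu)\ge C\|f_\e-\mu\|_{W^{-2,1}}$; taking $r=-2$ finishes the proof in a few lines.

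This exposes a genuine gap in your plan, and it is precisely the step you yourself flag as the ``main technical obstacle'' and then leave open. The nonlinear bootstrap over a window of length $\sim\e\log(1/\e)$ at frequency $k_\e\sim1/\e$, with the extra semilinear term $e^{U_\e}-1-U_\e$ to absorb, is a substantial piece of analysis that your sketch asserts will ``go through by adapting the same multi-scale energy/bootstrap scheme'' but does not carry out. More importantly, even granting that $f_\e(T_\e)-\mu$ reaches order one in some strong norm, the claimed conclusion $W_1(f_\e(T_\e),\mu)\ge c_0>0$ does not follow automatically: a disturbance of amplitude $O(1)$ concentrated at spatial frequency $k_\e\sim1/\e$ has $W_1$-distance to $\mu$ of order only $1/k_\e\sim\e$, because a $1$-Lipschitz test function averages out fast oscillations. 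One must prove that the quadratic self-interaction transfers a definite amount of mass to low (fixed) spatial frequencies; your text gestures at this (``the low-frequency content it generates through its quadratic self-interaction'') without establishing it. This is exactly what the $W^{r,1}$ statement of Theorem \ref{thmGrenier-revisited} for negative $r$ encapsulates and what makes the paper's short duality argument work. To make your route rigorous you would in effect have to reprove that negative-Sobolev lower bound, which is the content of \cite{HKH}; the paper's contribution here is the observation that it can simply be imported and composed with a Sobolev--duality inequality.
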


We can make the following observations.

\begin{itemize}
\item In Proposition \ref{thm3}, one can take some equilibrium $\mu$ satisfying the same regularity as in Theorem \ref{thm1}. However, there is no contradiction with our convergence results since in Theorem \ref{thm1}
we assume that $g_{0,\e}$ approximates in an analytic way $g_0$ and that $h_{0,\e}$ converges  faster than any polynomial in $\e$. Therefore, the quantification of the ``fast'' convergence in Theorem  \ref{thm1} is important.

%This corresponds to what we can call \emph{highly well-prepared data}.

\item Note that we can have $W_1(h_{0,\e},0) = o_{\e \to 0}  \left(\frac{1}{\e} \exp \left(  \frac{\lambda}{\e^3}  \exp \frac{15}{2\e^2}\right) \right)$, but 
$$
\| h_{0,\e} \|_{L^p} \sim 1\qquad \text{for any $p \in [1,\infty]$},
$$
as fast convergence to $0$ in the $W_1$ distance can be achieved for sequences exhibiting fast oscillations.
\end{itemize}

Theorem \ref{thm2} can also be compared to the following result in the \emph{stable} case, that corresponds to initial data consisting of one single Dirac mass (see \cite{HK}). In this case, the analogue of Theorem \ref{thm2} can be proved with weak assumptions on the initial data.

\begin{prop}
\label{thm3-2}
Consider
$$
g_{0}(x,v) = \rho_{0}(x) \delta_{v=u_{0}(x)}.
$$
where $\rho_0 >0$ and $\rho_0, u_0 \in H^s(\bt)$, for $s\geq 2$. Consider a sequence $(f_{0,\e})$ of non-negative initial data  in $L^1\cap L^\infty$  for \eqref{vpme-quasi} such that, for all $\e>0$,
$$
 \frac{1}{2}\int f_{0,\e}\vert v \vert ^2 dv dx 
 + \int \left(e^{U_{0,\e}} \log  e^{U_{0,\e}}   - e^{U_{0,\e}} +1\right)\,dx + \frac{\e^2}{2} \int \vert  U'_{0,\e} \vert^2 dx \leq C
$$
for some $C>0$,
and $U_{0,\e}$ is the solution to the Poisson equation
 $$
 \e^2U_{0,\e}''=e^{U_{0,\e}}- \int f_{0,\e} \, dv.
 $$
Also, assume that
 \begin{multline*}
\frac{1}{2}\int g_{0,\e}\vert v - u_0\vert ^2 dv dx 
+ \int \left(e^{U_{0,\e}} \log \left( e^{U_{0,\e}}/ \rho_0 \right)  - e^{U_{0,\e}} +\rho_0\right)\,dx 
+ \frac{\e^2}{2} \int \vert  U'_{0,\e} \vert^2 dx \to_{\e \to 0} 0,
\end{multline*}
Then there exists $T>0$ such that for any  global weak solution $f_\e(t)$ of \eqref{vpme-quasi} with initial condition $f_{0,\e}$, 
$$
\sup_{t \in [0,T]}  W_1(f_\e(t), g(t)) \to_{\e \to 0} 0,
$$
where   
$$
g(t,x,v) =  \rho(t,x) \delta_{v=u(t,x)},
$$
and $(\rho, u)$ satisfy the isothermal Euler system on $[0,T]$
\begin{equation}
\left\{ \begin{array}{ccc}\pt_t \rho+ \pt_x (\rho u)=0,  \\
\pt_t u + u \pt_x u = E, \\
E=-U', \\
U= \log  \rho,\\
\rho\vert_{t=0}= \rho_{0}, u\vert_{t=0}= u_{0}.
\end{array} \right.
\end{equation}
\end{prop}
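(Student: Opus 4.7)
The proof is based on a modulated energy (relative entropy) argument tailored to the semilinear Poisson equation $\e^2U_\e''=e^{U_\e}-\rho_\e$. First, since $\rho_0>0$ and $\rho_0,u_0\in H^s(\bt)$ with $s\geq 2$, the isothermal Euler system is symmetric hyperbolic with strictly positive density, so classical local well-posedness produces a unique solution $(\rho,u)\in C([0,T];H^s)$ on some $[0,T]$, keeping $\rho$ bounded below and $\pt_xu,\,(\log\rho)'\in L^\infty$. Set $U:=\log\rho$ and $g(t,x,v)=\rho(t,x)\delta_{v=u(t,x)}$.

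Second, I would use as Lyapunov functional the quantity that by assumption tends to $0$ at $t=0$,
\[
\mc H_\e(t):=\tfrac12\int f_\e|v-u|^2\,dv\,dx+\int\bigl(e^{U_\e}\log(e^{U_\e}/\rho)-e^{U_\e}+\rho\bigr)\,dx+\tfrac{\e^2}{2}\int|U_\e'|^2\,dx,
\]
whose three pieces measure the concentration of $f_\e$ on the graph $\{v=u(x)\}$, the entropic proximity of the electron density $e^{U_\e}$ to the limit density $\rho=e^U$ (a Csisz\'ar--Kullback functional), and the vanishing electrostatic energy. I then differentiate $\mc H_\e$ along the two systems. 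Using the Vlasov equation and $\pt_tu+u\pt_xu=-U'$, the kinetic part yields $-\int f_\e(v-u)^2\pt_xu\,dv\,dx+\int(U'-U_\e')(j_\e-\rho_\e u)\,dx$ with $j_\e:=\int vf_\e\,dv$. Differentiating the relative entropy using $\pt_te^{U_\e}=e^{U_\e}\pt_tU_\e$, $\pt_t\rho+\pt_x(\rho u)=0$, and the identity $\rho\,\pt_tU=\pt_t\rho$, and the field energy via $\e^2U_\e''=e^{U_\e}-\rho_\e$, the mixed $(U'-U_\e')$ cross-term should cancel after integration by parts against a term of the form $\int(U-U_\e)\pt_t\rho_\e\,dx$ produced by the continuity equation $\pt_t\rho_\e+\pt_xj_\e=0$. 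What remains should be bounded by $(\|\pt_xu\|_\infty+\|U'\|_\infty)\,\mc H_\e$, using the convexity of $\nu\mapsto\nu\log\nu-\nu$ and the positive lower bound on $\rho$; Gronwall then gives $\mc H_\e(t)\to 0$ uniformly on $[0,T]$.

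To convert $\mc H_\e\to 0$ into $W_1$ convergence I would write $W_1(f_\e,g)\leq W_1(f_\e,\rho_\e\delta_{v=u})+W_1(\rho_\e\delta_{v=u},\rho\delta_{v=u})$. The first term is bounded by $\int|v-u(x)|\,df_\e\leq\mc H_\e^{1/2}$ via Cauchy--Schwarz (since $f_\e$ is a probability measure). For the second, the Lipschitz push-forward by $x\mapsto(x,u(x))$ reduces matters to $W_1(\rho_\e,\rho)$ on $\bt$. I split $\rho_\e-\rho=(\rho_\e-e^{U_\e})+(e^{U_\e}-\rho)$: the first piece equals $-\e^2U_\e''$ and is controlled in $\dot H^{-1}$ (comparable to $W_1$ on the torus) by $\e^2\|U_\e'\|_{L^2}\leq\e\sqrt{2\mc H_\e}$, while the second is controlled by Csisz\'ar--Kullback--Pinsker, $\|e^{U_\e}-\rho\|_{L^1}^2\leq C\cdot(\text{entropy term})$. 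All three bounds tend to zero.

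The main obstacle is the Gronwall step: the exponential nonlinearity $e^{U_\e}$ prevents any naive quadratic expansion, and closing the estimate hinges on the precise convex-duality pairing between the entropy $\nu\log\nu-\nu$ and the potential $U_\e$. This is what singles out the above relative entropy as the correct modulation, and it explains why the $H^s$ regularity with $s\geq 2$ is essentially sharp here: one needs $\pt_xu$ and $(\log\rho)'$ in $L^\infty$ (which the Sobolev embedding $H^2\hookrightarrow W^{1,\infty}$ provides in one dimension) for the Gronwall constant to be finite.
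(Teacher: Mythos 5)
Your proposal is correct and follows the same modulated-energy strategy the paper employs, though it differs in two details worth pointing out.

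First, the relative-entropy Gronwall estimate (the claim that $\mathcal H_\e(t)\leq \mathcal H_\e(0)+G_\e(t)+C\int_0^t\|\pt_xu\|_\infty\mathcal H_\e\,ds$): the paper does not re-derive this for (VPME)$_\e$ but cites it as a black box from the first author's earlier work \cite{HK} (recorded as Theorem \ref{relative}). Your sketch of the differentiation and the cancellation of the $(U'-U_\e')$ cross-terms is plausible and consistent with \cite{HK}, but since the paper defers this step entirely, the genuinely new part of the argument is the conversion of $\mathcal H_\e\to 0$ into $W_1$ convergence, and that is where you should concentrate the effort.

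Second, your $W_1$ conversion takes a slightly different (and somewhat slicker) route than the paper's Corollary \ref{cor:2.2}. You push $\rho_\e,\rho$ forward along $x\mapsto(x,u(t,x))$ and then split $\rho_\e-\rho=(\rho_\e-e^{U_\e})+(e^{U_\e}-\rho)$, bounding the first piece in $\dot H^{-1}$ by $\e^2\|U_\e'\|_{L^2}\lesssim\e\sqrt{\mathcal H_\e}$ and the second by Csisz\'ar--Kullback--Pinsker; both are legitimate since on the torus a mean-zero signed measure has $W_1\leq\|\cdot\|_{\dot H^{-1}}$ against $1$-Lipschitz test functions, and $W_1\leq\tfrac12\|\cdot\|_{L^1}$. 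The paper instead tests directly against a $1$-Lipschitz $\varphi$, uses the Poisson equation to write $\rho_\e=e^{U_\e}-\e^2U_\e''$, integrates the $\e^2U_\e''$ term by parts, and controls the entropic piece by factoring $e^{U_\e}-\rho=(e^{U_\e/2}-\sqrt\rho)(e^{U_\e/2}+\sqrt\rho)$ together with the elementary inequality $(\sqrt y-\sqrt x)^2\leq x\log(x/y)-x+y$ in place of CKP. The conclusion and the order of the final bound in $\sqrt{\mathcal H_\e}+\e$ are the same in both versions; yours uses slightly more standard machinery (CKP, $\dot H^{-1}$ duality), the paper's is more self-contained. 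Either is acceptable; just make sure to track the Lipschitz constant $1+\|\pt_x u\|_\infty$ of the push-forward map, which is finite thanks to $u\in C([0,T];H^s)$ with $s\geq 2$ and the one-dimensional Sobolev embedding, exactly as you note.
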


\begin{remark}
We could also have stated another similar result using the estimates around \emph{stable symmetric homogeneous equilibria} of \cite{HKH}, but will not do so for the sake of conciseness.
\end{remark}

In what follows, we study the quasineutral limit by using Wasserstein stability estimates for the Vlasov-Poisson system.
Such stability estimates were proved for the classical Vlasov-Poisson system by Loeper \cite{Loe}, in  dimension three.
In the one-dimensional case, they can be improved, as recently shown by Hauray in the note \cite{HauX}.

The key estimate is a weak-strong stability result for the (VPME)$_\e$ system, which basically consists in an adaptation of Hauray's proof, and which we believe is of independent interest.

\begin{thm}
\label{thm4}
Let $T>0$. Let $f_\e^1,f_\e^2$ be two measure solutions of \eqref{vpme-quasi} on $[0,T]$, and assume that
$\rho_\e^1(t,x):=\int f_\e^1(t,x,v)\,dv$ is bounded in $L^\infty$ on $[0,T]\times \mathbb T$. Then, for all $\e \in (0,1]$, for all $t \in [0,T]$,
$$
W_1(f_\e^1(t),f_\e^2(t))
\leq \frac{1}{\e} e^{\frac1\e \left[(1 +3/\e^2 )t +(8+ \frac{1}{\e^2} e^{15/(2\e^2)} \Black) \int_0^t\|\rho_\e^1(\tau)\|_\infty\,d\tau\right]}W_1(f_\e^1(0),f_\e^2(0)).
$$
\end{thm}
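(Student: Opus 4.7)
The plan is to adapt Hauray's one-dimensional weak-strong Wasserstein stability argument for the classical Vlasov-Poisson system (see \cite{HauX}), now carrying along the semilinear term $e^U$ that appears in the Poisson equation for (VPME)$_\e$ and tracking the $\e$-dependence throughout. First I would pick an optimal transport plan $\pi_0$ between $f^1_\e(0)$ and $f^2_\e(0)$ for $W_1$, and let $(X^i_t, V^i_t)(x,v)$ denote the characteristic flow of (VPME)$_\e$ driven by the electric field $E^i_\e$. Since $f^i_\e(t)$ is the push-forward of $f^i_\e(0)$ under $(X^i_t, V^i_t)$, the image of $\pi_0$ under $(X^1_t, V^1_t, X^2_t, V^2_t)$ is an admissible coupling of $f^1_\e(t)$ and $f^2_\e(t)$, giving
$$W_1(f^1_\e(t), f^2_\e(t)) \leq D(t) := \int \bigl(|X^1_t - X^2_t| + |V^1_t - V^2_t|\bigr)\, d\pi_0.$$

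Differentiating $D$ along the characteristic equations yields $\dot D(t) \leq D(t) + \int |E^1_\e(t, X^1_t) - E^2_\e(t, X^2_t)|\, d\pi_0$, and the natural split
$$|E^1_\e(t, X^1) - E^2_\e(t, X^2)| \leq \|\pt_x E^1_\e(t)\|_\infty\, |X^1_t - X^2_t| + \|E^1_\e(t) - E^2_\e(t)\|_\infty$$
reduces the problem to two scalar field bounds. The Poisson equation $\e^2 U_\e'' = e^{U_\e} - \rho_\e$ together with a maximum-principle / energy-type $L^\infty$ bound on $U^1_\e$ (driven by $\|\rho^1_\e\|_\infty$) gives $\|\pt_x E^1_\e\|_\infty = \|(U^1_\e)''\|_\infty \leq \tfrac{1}{\e^2}(e^{\|U^1_\e\|_\infty} + \|\rho^1_\e\|_\infty)$, producing the explicit $1/\e^2$ and exponential prefactors. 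For the difference, subtracting the two Poisson equations and linearizing yields the Helmholtz-type equation on $\bt$
$$-\e^2 (U^1_\e - U^2_\e)'' + a_\e(x)\,(U^1_\e - U^2_\e) = \rho^1_\e - \rho^2_\e, \qquad a_\e(x) := \int_0^1 e^{s U^1_\e + (1-s) U^2_\e}\, ds > 0,$$
to which I would apply explicit one-dimensional Green-function estimates for $-\e^2 \pt_x^2 + a_\e$, bounding $\|E^1_\e - E^2_\e\|_\infty = \|(U^1_\e - U^2_\e)'\|_\infty$ by the $\dot W^{-1,\infty}$-norm of $\rho^1_\e - \rho^2_\e$. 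Kantorovich-Rubinstein duality then gives $\|\rho^1_\e - \rho^2_\e\|_{\dot W^{-1,\infty}} = W_1(\rho^1_\e, \rho^2_\e) \leq W_1(f^1_\e(t), f^2_\e(t)) \leq D(t)$ via projection of couplings onto the $x$-variable, and Gronwall on the resulting linear differential inequality produces the announced bound.

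The chief difficulty, absent from Hauray's classical treatment, is precisely the nonlinear term $e^U$: its linearization forces the use of the screened operator $-\e^2 \pt_x^2 + a_\e$ rather than the pure Laplacian, and the coefficient $a_\e$ can itself grow with a negative power of $\e$ through the a priori $L^\infty$ bounds on the potentials. Propagating this exponentially large coefficient through the Green-function estimate for the screened operator is what generates the double-exponential prefactor $e^{15/(2\e^2)}$ in the final Gronwall estimate, and also explains why the classical Vlasov-Poisson version mentioned in Remark \ref{rmk:VP} enjoys only a single exponential: there $E$ is directly the antiderivative of $1-\rho$, so no linearization and no screened operator are needed.
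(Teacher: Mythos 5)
Your overall Gronwall-on-characteristics plan is the right shape, and you correctly identify the $\e$-bookkeeping and the linearization $a_\e = \int_0^1 e^{sU^1_\e + (1-s)U^2_\e}\,ds$ as the source of the $e^{15/(2\e^2)}$ factor. But there is a genuine gap in the way you propose to estimate $\int|E^1_\e(X^1_t)-E^2_\e(X^2_t)|\,d\pi_0$, and fixing it essentially forces you into the paper's strategy.

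The split $|E^1(X^1)-E^2(X^2)| \le \|\pt_x E^1\|_\infty\,|X^1-X^2| + \|E^1-E^2\|_\infty$ cannot be closed. First, the claimed identity $\|\rho^1_\e-\rho^2_\e\|_{\dot W^{-1,\infty}} = W_1(\rho^1_\e,\rho^2_\e)$ is false: on $\bt$, duality against $\dot W^{1,1}$ gives $\|\rho^1-\rho^2\|_{\dot W^{-1,\infty}} = \|F_1-F_2\|_{L^\infty}$ (with $F_i$ the distribution functions), whereas $W_1(\rho^1,\rho^2) = \|F_1-F_2\|_{L^1}$, so the inequality goes the wrong way ($W_1 \le \|\cdot\|_{\dot W^{-1,\infty}}$, not the reverse). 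Second, and more structurally, the singular part of the force is a convolution with $W'(x) = x - \tfrac12\mathrm{sgn}(x)$, which is BV but not Lipschitz; no Green-function or duality estimate will give a pointwise bound $\|E^1-E^2\|_\infty \lesssim W_1(\rho^1,\rho^2)$ because the kernel has a jump. You can get an $L^1_x$ bound of that form (using $\int|g(\cdot+h)-g(\cdot)| \le \|g\|_{BV}|h|$), but then the term would need to be integrated against $\rho^1_\e\in L^\infty$, which requires splitting the other way: $|E^1(X^1)-E^2(X^1)| + |E^2(X^1)-E^2(X^2)|$. That variant fails too, because $\|\pt_x E^2_\e\|_\infty$ is out of reach — $f^2_\e$ is only a measure solution, $\rho^2_\e$ can be atomic, and the singular part of $\pt_x E^2_\e$ is $(1-\rho^2_\e)/\e^2$.

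The paper resolves exactly this tension by decomposing $E_\e = \bar E_\e + \widehat E_\e$, where $\e^2\bar U_\e'' = 1-\rho_\e$ is the classical singular field and $\e^2\widehat U_\e'' = e^{\bar U_\e+\widehat U_\e}-1$ is the new semilinear remainder. The singular piece $\bar E_\e$ is handled by Hauray's bilinear estimate (\cite[Proof of Theorem 1.8]{HauX}): writing $\bar E^i(X^i(s)) = -\int W'(X^i(s)-X^i(s'))\,ds'$, one uses the explicit kernel and the push-forward structure $\rho^1 = (X^1)_\# ds$ to show $\int|\bar E^1(X^1)-\bar E^2(X^2)|\,ds \le 8\|\rho^1\|_\infty\int|X^1-X^2|\,ds$, with no Lipschitz bound on either field. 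The remainder $\widehat E_\e$ is then shown (Lemma \ref{lem:hatU}) to be Lipschitz with constant $3/\e^2$ \emph{independent of} $\rho$, because $\int e^{(\bar\U+\widehat\U)/\e^2}=1$ self-normalizes; this is precisely the ingredient that lets a Lipschitz-plus-$L^\infty$ split work, but only for $\widehat E$, combined with the $L^2$ energy estimate on $\widehat E^1-\widehat E^2$ integrated against $\rho^1\in L^\infty$. Without that decomposition, the bilinear trick and the $\rho$-independent Lipschitz constant are both unavailable to you.
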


The proofs of Theorems \ref{thm1} and \ref{thm2} rely on this stability estimate and on a method introduced by Grenier \cite{Gr96} to justify the quasineutral limit for initial data with uniform analytic regularity.%which will allow us to justify the quasineutral limit for the sequences of initial data. %for highly well-prepared data.

\bigskip

This paper is organized as follows. In Section \ref{sec:wasserstein}, we start by proving Theorem \ref{thm4}.  We then turn to the global weak existence theory in $\mathcal{P}_1(\bt \times \br)$: in Section \ref{sec:exi}, we prove Theorem~\ref{thm-exi}, using some estimates exhibited in the previous section. Section \ref{sec:proofs} is then dedicated to the proof of the main Theorems \ref{thm1} and \ref{thm2}.
We conclude the paper with the study of auxiliary results: in Section \ref{sec:insta} we prove Proposition \ref{thm3}, while in Section \ref{sec:sta} we prove Proposition \ref{thm3-2}. 

%We conclude with one appendix dealing with the global weak existence theory in $\mathcal{P}_1(\bt \times \br)$ (in which we prove Theorem~\ref{thm-exi}).

\section{Weak-strong stability for the VP system with massless electrons: proof of Theorem \ref{thm4}}
\label{sec:wasserstein}
In this section we prove Theorem \ref{thm4}, i.e., the weak-strong stability estimate for solutions of the (VPME)$_\e$ system. 
Notice that our weak-strong stability  estimate encloses in particular the case
\eqref{vpme} by taking $\e=1$.

Let us introduce the setup of the problem. 
We follow the same notations as in \cite{HauX}.
In particular, we will use a Lagrangian formulation of the problem.

As a preliminary step, it will be convenient to split the electric field in a singular part behaving as the electric field in Vlasov-Poisson
and a regular term.
More precisely,
let us decompose $E_\e$ as $\bar E_\e+\widehat E_\e$ where
$$
\bar E_\e=-\bar U_\e',\qquad \widehat E_\e=-\widehat U_\e',
$$
and $\bar U_\e$ and $\widehat U_\e$ solve respectively
$$
\e^2\bar U_\e''=1-\rho_\e ,\qquad \e^2\widehat U_\e''=e^{\bar U_\e+\widehat U_\e} - 1.
$$
Notice that in this way $U_\e:=\bar U_\e+\widehat U_\e$ solves
$$
\e^2U_\e''=e^{U_\e}-\rho_\e.
$$
Then we can rewrite \eqref{vpme-quasi} as
$$
(VPME)_\e:= \left\{ \begin{array}{ccc}\pt_t f_\e+v\cdot \pt_x f_\e+ (\bar E_\e+\widehat E_\e)\cdot \pt_v f_\e=0,  \\
\bar E_\e=-\bar U_\e',\qquad \widehat E_\e=-\widehat U_\e', \\
\e^2\bar U_\e''=1-\rho_\e,\\
\e^2\widehat U_\e''=e^{\bar U_\e+\widehat U_\e} - 1,\\
f_\e(x,v,0)\ge0,\ \  \int f_\e(x,v,0)\,dx\,dv=1.
\end{array} \right.
$$

To prove Theorem \ref{thm4},
we shall first show a weak-strong stability estimate for a
rescaled system (see (VPME)$_{\e,2}$ below), and then obtain our result by a further scaling argument.

\subsection{A scaling argument}
Let us define
$$
\f_\e(t,x,v):=\frac{1}{\e} f_\e\biggl(\e t,x,\frac{v}{\e}\biggr).
$$
Then a direct computation gives 
$$
(VPME)_{\e,2}:=  \left\{ \begin{array}{ccc}\pt_t \f_\e+v\cdot \pt_x \f_\e+ (\bar \E_\e+\widehat \E_\e)\cdot \pt_v \f_\e=0,  \\
\bar \E_\e=-\bar \U_\e',\qquad \widehat \E_\e=-\widehat \U_\e', \\
\bar \U_\e''=1-\rrho_\e,\\
\widehat \U_\e''=e^{(\bar \U_\e+\widehat \U_\e)/  \e^2 \Black} - 1,\\
\f_\e(x,v,0)\ge0,\ \  \int \f_\e(x,v,0)\,dx\,dv=1,
\end{array} \right.
$$
where
$$
\rrho_\e(t,x):=\int \f_\e(t,x,v)\,dv.
$$
We remark that $\bar \U_\e$ is just the classical Vlasov-Poisson potential so, as in \cite{HauX},
$$
\bar \E_\e(t,x)=-\int_{\mathbb T} W'(x-y)\rrho_\e(t,y)\,dy,
$$
where 
$$
W(x):=\frac{x^2-|x|}{2}
$$
(recall that we are identifying $\mathbb T$ with $[-1/2,1/2)$ with periodic boundary conditions).
In addition, since $W$ is $1$-Lipschitz and $|W|\leq 1$,
recalling that
\be
\label{eq:U}
\bar \U_\e(t,x)=\int_{\mathbb T} W(x-y)\rrho_\e(t,y)\,dy
\ee
we see that
$\bar \U_\e$ is $1$-Lipschitz and $|\bar \U_\e|\leq 1$.\\

\subsection{Weak-strong estimate for the rescaled system}
The goal of this section is to prove a quantitative weak-strong convergence for the rescaled system
(VPME)$_{\e,2}$.
In order to simplify the notation, we omit the subscript $\e$.
In the sequel we will need the following elementary result:
\begin{lem} \label{lem:media nulla}
Let $h: [-1/2,1/2] \to \br$ be a continuous  function such that $\int_{-1/2}^{1/2} h=0.$ Then 
$$
\|h\|_{\infty}\le \int_{-1/2}^{1/2} |h'|.
$$
\end{lem}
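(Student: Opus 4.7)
The plan is to exploit the mean-zero hypothesis to locate a point where $h$ vanishes, and then bound $h$ pointwise by an integral of $|h'|$ via the fundamental theorem of calculus. (Implicitly the statement requires $h$ to be absolutely continuous — otherwise $\int |h'|$ is not well defined — but I will treat this as understood and not dwell on regularity.)

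First, since $h$ is continuous on the compact interval $[-1/2,1/2]$ with $\int_{-1/2}^{1/2} h = 0$, either $h \equiv 0$ (in which case the inequality is trivial) or $h$ must attain both positive and negative values. By the intermediate value theorem there exists $x_0 \in [-1/2,1/2]$ with $h(x_0)=0$.

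Next, for any $x \in [-1/2,1/2]$, the fundamental theorem of calculus gives
$$
h(x) = h(x) - h(x_0) = \int_{x_0}^{x} h'(t)\,dt,
$$
so that
$$
|h(x)| \le \left| \int_{x_0}^{x} |h'(t)|\,dt \right| \le \int_{-1/2}^{1/2} |h'(t)|\,dt.
$$
Taking the supremum over $x \in [-1/2,1/2]$ yields the claimed bound. There is no substantive obstacle here: the only subtlety is the tacit regularity assumption on $h$ needed to invoke the fundamental theorem of calculus.
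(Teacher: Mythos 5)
Your proof is correct and follows exactly the same approach as the paper: find a zero of $h$ from the mean-zero condition, then apply the fundamental theorem of calculus and bound the resulting integral. The remark about tacit regularity is a reasonable observation but does not change the argument.
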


\begin{proof}
Since $\int_{-1/2}^{1/2} h=0$ there exists a point $\bar x$ such that $h(\bar{x})=0.$ Then, by the Fundamental  Theorem of Calculus,

$$
|h(x)|= \Big|\int_{\bar{x}}^x h' \Big|\le \int_{-1/2}^{1/2} |h'| \qquad \forall\, x \in [-1/2,1/2].
$$

\end{proof}
We can now prove existence of solutions to the equation for $\widehat \U$.
\begin{lem}
\label{lem:hatU}
There exists a unique solution of
\be
\label{eq-lem-Poisson}
\widehat \U''=e^{(\bar \U+\widehat \U)/\e^2} - 1\qquad \text{on }\mathbb T
\ee
and this solution satisfies
$$
\|\widehat \U\|_\infty \leq 3,\quad \|\widehat \U'\|_\infty \leq 2,\quad  \|\widehat \U''\|_\infty \leq \frac{3}{\e^2}.
$$
\end{lem}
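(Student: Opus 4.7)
The plan is to obtain $\widehat{\U}$ as the unique minimizer of a strictly convex functional on $H^1(\mathbb T)$, and then to deduce the three pointwise bounds from the equation itself together with the integral compatibility obtained by integrating \eqref{eq-lem-Poisson} over $\mathbb T$ and using periodicity.

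For existence and uniqueness, I would introduce the functional
$$
J(u) := \int_{\mathbb T} \left( \tfrac{1}{2}(u')^2 + \e^2 e^{(\bar{\U}+u)/\e^2} - u \right) dx
$$
on $H^1(\mathbb T)$, whose Euler-Lagrange equation is exactly \eqref{eq-lem-Poisson}. Its second variation $\int_{\mathbb T} (\varphi')^2 + \e^{-2} e^{(\bar{\U}+u)/\e^2}\varphi^2\,dx$ is strictly positive for $\varphi\not\equiv 0$, so $J$ is strictly convex, which gives uniqueness. For coercivity, decompose $u = c + \tilde u$ with $\int_{\mathbb T} \tilde u\, dx = 0$: the Poincar\'e inequality controls $\tilde u$ by $\|u'\|_{L^2}$, Jensen's inequality gives $\int_{\mathbb T} e^{(\bar{\U}+u)/\e^2}\,dx \geq e^{(c-1)/\e^2}$ using $\|\bar{\U}\|_\infty \leq 1$ (which handles $c \to +\infty$), and the linear term $-\int u = -c$ handles $c \to -\infty$. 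The direct method then produces the unique minimizer, which is smooth by standard elliptic regularity.

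For the $C^0$ and $C^1$ bounds, integrating \eqref{eq-lem-Poisson} over $\mathbb T$ yields the compatibility
$$
\int_{\mathbb T} e^{(\bar{\U}+\widehat{\U})/\e^2}\, dx = 1,
$$
so by the intermediate value theorem there is an $x_0 \in \mathbb T$ with $(\bar{\U}+\widehat{\U})(x_0) = 0$, whence $|\widehat{\U}(x_0)| \leq \|\bar{\U}\|_\infty \leq 1$. Since $\widehat{\U}'$ has zero mean by periodicity, Lemma \ref{lem:media nulla} gives
$$
\|\widehat{\U}'\|_\infty \leq \int_{\mathbb T} |\widehat{\U}''|\, dx \leq \int_{\mathbb T} e^{(\bar{\U}+\widehat{\U})/\e^2}\, dx + 1 = 2,
$$
and integrating from $x_0$ yields $\|\widehat{\U}\|_\infty \leq 1 + 2\cdot \operatorname{diam}(\mathbb T) = 3$.

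For the bound $\|\widehat{\U}''\|_\infty \leq 3/\e^2$, the key trick is to differentiate \eqref{eq-lem-Poisson} once, obtaining
$$
\widehat{\U}''' = \frac{\bar{\U}' + \widehat{\U}'}{\e^2}\, e^{(\bar{\U}+\widehat{\U})/\e^2}.
$$
Since $\widehat{\U}''$ has zero mean by periodicity, a second application of Lemma \ref{lem:media nulla} combined with the integral compatibility gives
$$
\|\widehat{\U}''\|_\infty \leq \int_{\mathbb T} |\widehat{\U}'''|\, dx \leq \frac{\|\bar{\U}'\|_\infty + \|\widehat{\U}'\|_\infty}{\e^2} \int_{\mathbb T} e^{(\bar{\U}+\widehat{\U})/\e^2}\, dx \leq \frac{3}{\e^2}.
$$
This differentiation step is the decisive point: applied directly to the equation, the crude pointwise estimate $|\bar{\U}+\widehat{\U}| \leq 4$ would only yield the exponentially large bound $e^{4/\e^2}-1$ for $|\widehat{\U}''|$, whereas the trick above exploits the mean-zero structure of $\widehat{\U}''$ to convert the exponential into the integral constraint $\int e^{(\bar{\U}+\widehat{\U})/\e^2} = 1$, leaving only the polynomial factor $1/\e^2$ coming from the differentiation of the exponent.
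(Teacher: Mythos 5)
Your proof is correct and follows the same overall variational strategy as the paper: minimize the functional $J$ (which is exactly the paper's $E[h]$), then obtain the $C^1$ and $C^2$ bounds from Lemma \ref{lem:media nulla} together with the compatibility $\int_{\mathbb T} e^{(\bar\U+\widehat\U)/\e^2}\,dx=1$, the latter after differentiating the equation once. You differ in two sub-arguments, both valid. For coercivity you decompose $u=c+\tilde u$ and combine Poincar\'e with Jensen to drive $J\to\infty$ as $|c|\to\infty$; the paper instead proves the elementary pointwise inequality $\e^2 e^{(s-1)/\e^2}-s\ge|s|-4$ and applies it along a minimizing sequence, which yields the same $H^1$ compactness while also producing the uniform H\"older estimate that it then uses to pass to the limit in the exponential term --- a point your sketch delegates to ``the direct method,'' where one should really say that the compact embedding $H^1(\mathbb T)\hookrightarrow C(\mathbb T)$ gives the needed strong convergence. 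For the $C^0$ bound you locate, via the intermediate value theorem and the compatibility, a point $x_0$ with $(\bar\U+\widehat\U)(x_0)=0$ and integrate $\widehat\U'$ from there; the paper instead supposes $\widehat\U\ge M$ somewhere, deduces $\bar\U+\widehat\U\ge M-3$ everywhere from $\|\widehat\U'\|_\infty\le2$ and $|\bar\U|\le1$, and concludes $M\le 3$ from $1=\int e^{(\bar\U+\widehat\U)/\e^2}\ge e^{(M-3)/\e^2}$. Both versions use the same inputs and give the same constant; yours is arguably the more transparent. One small slip: the metric diameter of $\mathbb T\simeq[-1/2,1/2)$ is $1/2$, not $1$, so your computation actually yields $\|\widehat\U\|_\infty\le 1+2\cdot\tfrac12=2$ --- a slightly better bound than the $3$ you (and the paper) record.
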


\begin{proof}
We prove existence of $\widehat U$ by finding a minimizer for
$$
h \mapsto E[h]:= \int_{\mathbb T} \left(\frac12 (h')^2 + \e^2e^{(\bar \U+h)/\e^2} - h \right) \, dx
$$
among all periodic functions $h :[-1/2,1/2]\to \br$.
Indeed, as we shall see later, the Poisson equation we intend to solve is nothing but the Euler-Lagrange equation of the above functional.

Notice that since $E[h]$ is a strictly convex functional, solutions of the Euler-Lagrange equation
are minimizers and the minimizer is unique.
Let us now prove the existence of such a minimizer.

Take $h_k$ a minimizing sequence, that is
$$
E[h_k] \to \inf_{h}E[h]=:\alpha.
$$
Notice that by choosing $h=-\bar\U$ we get (recall that $|\bar\U|, |\bar\U'|\leq 1$, see \eqref{eq:U})
$$
\alpha \leq E[-\bar\U] =\int_{\mathbb T} \left( \frac12 (\bar\U')^2 +\bar\U \right) \, dx \leq 2,
$$
hence
$$
E[h_k] \leq 3 \qquad \text{for $k$ large enough.}
$$
We first want to prove that $h_k$ is uniformly bounded in $H^1$.

We observe that, since $\bar \U\geq -1$,
for any $s \in \br$
$$
\e^2e^{(\bar \U(x)+s)/\e^2} -s \geq \e^2e^{(s-1)/\e^2}-s.
$$
Now, for $s \geq 2$  (and $\e \in (0,1]$) \Black we have
$$
\e^2e^{(s-1)/\e^2}-s \geq e^{s-1}-s \geq s - 2\log 2 \geq s -3,
$$
while for $s\leq 2$ we have
$$
\e^2e^{(s-1)/\e^2}-s \geq -s \geq |s|-4,
$$
thus 
$$
e^{(s-1)/\e^2}-s \geq |s|- 4 \qquad \forall\,s \in \br.
$$
%$$
%e^{\bar U(x)+h(x)} - h(x) \geq e^{h(x)-1}-h(x)\geq \min_{s \in \br} e^{s-1}-s =-1,
%$$
Therefore
\be
\label{eq:energy}
3 \geq E[h_k] \geq \int_{\mathbb T} \frac12 (h_k')^2 +|h_k| - 4,
\ee
which gives
$$
 \int_{\mathbb T} \frac12 (h_k')^2 \leq 8.
$$
In particular, by the Cauchy-Schwarz inequality this implies
\be
\label{eq:holder}
\begin{split}
|h_k(x)-h_k(z)|& \leq \biggl|\int_z^x|h_k'(y)|\,dy \biggr|\leq \sqrt{|x-z|} \sqrt{\int_{\mathbb T}|h_k'(y)|^2\,dy}\\
&\leq 4\,\sqrt{|x-z|}.
\end{split}
\ee
Up to now we have proved that $h_k'$ are uniformly bounded in $L^2$.
We now want to control $h_k$ in $L^\infty$.
 
Let $M_k$ denote the maximum of $|h_k|$ over $\bt$. Then by \eqref{eq:holder} we deduce that
$$
h_k(x) \geq M_k - 4 \qquad \forall\,x \in \bt,
$$
hence, recalling \eqref{eq:energy},
\begin{align*}
3 &\geq E[h_k] \geq \int_{\mathbb T}( |h_k(x)| - 4) \,dx
\geq  M_k -8,
\end{align*}
which implies $M_k \leq 11$.
Thus, we proved that $|h_k|\leq 11$ for all $k$ large enough,
which implies in particular that $h_k$ are uniformly bounded in $L^{2}$.
 
 In conclusion, we have proved that $h_k$ are uniformly bounded in $H^1$ (both $h_k$ and $h_k'$
 are uniformly bounded in $L^2$) and in addition they are uniformly bounded and uniformly continuous (as a consequence of \eqref{eq:holder}).
 Hence, up to a subsequence, they converge weakly in $H^1$ (by weak compactness of balls in $H^1$) and uniformly (by the Ascoli-Arzel\`a theorem) to a function $\widehat \U$.
 We claim that $\widehat \U$ is a minimizer.
 Indeed, by the lower semicontinuity of the $L^2$ norm under weak convergence,
 $$
\int_{\mathbb T}|\widehat \U'(x)|^2\,dx  \leq \liminf_k \int_{\mathbb T}|h_k'(x)|^2\,dx.
 $$
On the other hand, by uniform convergence,
 $$
\int_{\mathbb T}\left(\e^2 e^{(\bar \U(x)+h_k(x))/\e^2} - h_k(x)\right)\,dx \to 
\int_{\mathbb T}\left( \e^2 e^{(\bar \U(x)+\widehat \U(x))/\e^2} - \widehat \U(x) \right)\,dx.
 $$
 In conclusion
 $$
 E[\widehat \U] \leq \liminf_k E[h_k]=\alpha,
 $$
 which proves that $\widehat \U$ is a minimizer.
 
 By the minimality,
 $$
 0=\frac{d}{d\eta}\bigg|_{\eta=0}E[\widehat\U+\eta h]=\int_{\mathbb T} \left( \widehat \U'\,h' + e^{(\bar \U+\widehat \U)/\e^2}h-h \right) \, dx
 =\int_{\mathbb T}[-\widehat \U''+e^{(\bar \U+\widehat \U)/\e^2} - 1]h \, dx,
 $$
 which proves that $\widehat \U$ solves \eqref{eq-lem-Poisson} by the arbitrariness of $h$.
 
 \bigskip
 
We now prove the desired estimates on $\widehat \U$.
Since $\widehat \U'$ is a periodic continuous function we have
$$
0=\int_{\mathbb T} \widehat \U'' \, dx =\int_{\mathbb T} \(e^{(\bar \U+\widehat \U)/\e^2}-1 \) \, dx.%=\int_{\mathbb T} e^{(\bar \U+\widehat \U)/\e^2}-1,
$$
Thus we get
$$
\int_{\mathbb T} \big| \widehat \U''\big| \, dx \leq \int_{\mathbb T}\Big| \(e^{(\bar \U+\widehat \U)/\e^2}-1 \)  \Big| \, dx\leq \int_{\mathbb T} e^{(\bar \U+\widehat \U)/\e^2}\, dx +1=2,
$$
and so, by Lemma \ref{lem:media nulla}, we deduce 
$$
\|\widehat \U'\|_\infty \le  \int_{\mathbb T} |\widehat \U''| \, dx \le 2.
$$
Since $\|\widehat \U'\|_\infty \le 2,$ $\|\bar \U\|_\infty\le 1$, and $\int_{\mathbb T} e^{(\bar \U+\widehat \U)/\e^2} \, dx =1$ we claim that $\|\widehat \U \|_\infty \le 3.$
Indeed, suppose that there exists $\bar x$ such that $\widehat \U (\bar x) \ge M.$ Then, recalling that $\|\widehat \U'\|_\infty \le 2,$ we have $\widehat \U (x) \ge M-2$ for all $x$.
Using that $\|\bar \U\|_\infty\le 1$ we get $\widehat \U(x)+\bar \U(x) \ge M-3.$
Then,
$$
1= \int_{\mathbb T} e^{(\bar \U+\widehat \U)/\e^2} \, dx  \ge \int_{\mathbb T} e^{(M-3)/\e^2} \, dx = e^{(M-3)/\e^2} \Rightarrow  M \le 3.
$$
On the other hand, if there exists $\bar x$ such that $\widehat \U (\bar x) \le -M,$ 
then an analogous argument gives
$$
1= \int_{\mathbb T} e^{(\bar \U+\widehat \U)/\e^2} \, dx  \le \int_{\mathbb T} e^{{-(M-3)/\e^2}} \, dx = e^{{-(M-3)/\e^2}} \Rightarrow  M \le 3.
$$
Hence we have that $\|\widehat \U \|_\infty \le 3.$
Finally, to estimate $\widehat \U''$ we 
differentiate the equation 
$$
\widehat \U''= \(e^{(\bar \U+\widehat \U)/\e^2}-1\),
$$
recall that $\|\bar \U'\|_\infty\le 1,$ $\|\widehat \U'\|_\infty\le 2$, and $\int_{\mathbb T}e^{(\bar \U+\widehat \U)/\e^2}=1,$ to obtain
$$
 \int_{\mathbb T}|\widehat \U'''| = \int_{\mathbb T} \Big|e^{(\bar \U+\widehat \U)/\e^2}\biggl(\frac{\bar \U'+\widehat \U'}{\e^2} \biggr)\Big| \le \frac{\|\bar \U'\|_\infty+\|\widehat \U'\|_\infty}{\e^2}\int_{\mathbb T}e^{(\bar \U+\widehat \U)/\e^2} \le \frac{3}{\e^2},
$$
so, by Lemma \ref{lem:media nulla} again, we get
$$
\|\widehat \U''\|_\infty\le  \int_{\mathbb T}|\widehat \U'''|\le  \frac{3}{\e^2},
$$
as desired.
\end{proof}

To prove the weak-strong stability result for (VPME)$_{\e,2}$, following the strategy used in \cite{HauX} 
for the classical Vlasol-Poisson system, we will represent
solutions in Lagrangian variables instead of using the Eulerian formulation.
In this setting, the phase space is $\bb{T}\times\br$ and particles in the phase-space are represented by $Z=(X,V)$, where the random variables $X:[0,1]\to \bb{T}$ and $V:[0,1]\to \br$ are maps from the probability space
$([0,1],ds)$ to the physical space.
The idea is that elements in $[0,1]$ do not have any physical meaning
but they just label the particles $\{(X(s),V(s))\}_{s \in [0,1]}\subset \bb{T}\times\br$.

We mention that this ``probabilistic'' point of view was already introduced for ODEs by Ambrosio in his study of linear transport equations \cite{amb} and generalized later by Figalli to the case of SDEs \cite{fig}.

To any random variable as above, one associates the mass distribution of particles in the phase space as follows:\footnote{Note that the law of $(X,V)$ may not be absolutely continuous, we just wrote the formula to explain
the heuristic.
}
$$
\f(x,v)\,dx\,dv= (X,V)_\# ds,
$$
that is $\f$ is the law of $(X,V)$.
So, instead of looking for the evolution of $\f$, we rather let $Z_t:=(X_t,V_t)$ evolve accordingly to the following Lagrangian system (recall that, to simplify the notation, we are omitting the subscript $\e$)
\begin{equation}
\label{eq:VPLEL}
(VPME)_{L,2}:= \left\{ \begin{array}{cc}
\dot X_t=V_t,\\
\dot V_t=\bar \E(X_t) + \widehat \E(X_t),\\
\bar \E=-\bar \U',\qquad \widehat \E=-\widehat \U', \\
\bar \U''=1-\rho,\\
\widehat \U''=e^{(\bar \U+\widehat \U)/\e^2} - 1,\\
\rho(t)=(X_t)_\# ds.
\end{array} \right.
\end{equation}
(Such a formulation is rather intuitive if one thinks of the evolution of finitely many particles.)
Notice that the fact that $\rho(t)$ is the law of $X_t$ is a consequence of the fact that $\f$ is the law of $Z_t$.

As initial condition we impose that at time zero $Z_t$ is distributed accordingly to $\f_0$, that is
$$
(Z_0)_\#ds=\f_0(x,v)\,dx\,dv.
$$
%Our goal is estimate the stability of solutions.
We recall the following characterization of the $1$-Wasserstein distance, used also by Hauray in \cite{HauX}:
$$
W_1(\mu,\nu)=\min_{X_\#ds=\mu,\,Y_\#ds=\nu} \int_0^1|X(s)-Y(s)|\,ds.
$$
Hence, if $\f_1,\f_2$ are two solutions of (VPME)$_{\e,2}$, in order to control $W_1(\f_1(t),\f_2(t))$, it is sufficient to do the following:
choose $Z_0^1$ and $Z_0^2$ such that
$$
(Z_0^i)_\#ds=d f^i(0,x,v),\qquad i=1,2
$$
and 
$$
W_1(f^1(0),f^2(0))= \int_0^1|Z_0^1(s)-Z_0^2(s)|\,ds,
$$
and prove a bound on $ \int_0^1|Z_t^1(s)-Z_t^2(s)|\,ds$ for $t \geq 0$.
In this way we automatically get a control on
$$
W_1(\f^1(t),\f^2(t))\leq  \int_0^1|Z_t^1(s)-Z_t^2(s)|\,ds.
$$
So, our goal is to estimate $ \int_0^1|Z_t^1(s)-Z_t^2(s)|\,ds$.
For this, as in \cite{HauX} we consider 
$$
\frac{d}{dt} \int_0^1|Z_t^1(s)-Z_t^2(s)|\,ds.
$$
Using \eqref{eq:VPLEL}, this is bounded by
\begin{align*}
&\int_0^1|V_t^1(s)-V_t^2(s)|\,ds+ \int_0^1|\E_t^1(X_t^1)-\E_t^2(X_t^2)|\,ds\\
& \leq \int_0^1|Z_t^1(s)-Z_t^2(s)|\,ds+ \int_0^1|\bar \E_t^1(X_t^1)-\bar \E_t^2(X_t^2)|\,ds
+ \int_0^1|\widehat \E_t^1(X_t^1)-\widehat \E_t^2(X_t^2)|\,ds\\
& \leq \int_0^1|Z_t^1(s)-Z_t^2(s)|\,ds+ 8\|\rrho^1(t)\|_\infty \int_0^1|Z_t^1(s)-Z_t^2(s)|\,ds\\
&+\int_0^1|\widehat \E_t^1(X_t^1)-\widehat \E_t^2(X_t^1)|\,ds+\int_0^1|\widehat \E_t^2(X_t^1)-\widehat \E_t^2(X_t^2)|\,ds,
\end{align*}
where we split $\E_t^1$ and $\E_t^2$ as a sum of $\bar \E_t^1+\widehat \E_t^1$ and
 $\bar \E_t^2+\widehat \E_t^2$, and
 we applied the estimate in \cite[Proof of Theorem 1.8]{HauX}
 to control 
 $$
 \int_0^1|\bar \E_t^1(X_t^1)-\bar \E_t^2(X_t^2)|\,ds
 $$
 by
 $$
 8\|\rrho^1(t)\|_\infty \int_0^1|Z_t^1(s)-Z_t^2(s)|\,ds.
 $$
To estimate the last two terms, we argue as follows:
for the second one we recall that $\widehat \E_t^2$ is $(3/\e^2)$-Lipschitz (see Lemma \ref{lem:hatU}), hence
$$
\int_0^1|\widehat \E_t^2(X_t^1)-\widehat \E_t^2(X_t^2)|\,ds
\leq \frac{3}{\e^2}
\int_0^1|X_t^1-X_t^2|\,ds \leq  \frac{3}{\e^2}\int_0^1|Z_t^1-Z_t^2|\,ds.
$$
For the first term, we 
first observe the following fact: recalling \eqref{eq:U}
and that $W$ is 1-Lipschitz, we have
\be
\label{eq:bound U}
\begin{split}
|\bar \U_t^1-\bar \U_t^2|(x)&=
\biggl|\int_0^1 W(x-X_t^1) - W(x-X_t^2)\,ds \biggr|\\
& \leq \int_0^1 |X_t^1-X_t^2|\,ds\leq \int_0^1|Z_t^1-Z_t^2|\,ds
\end{split}
\ee
for all $x$.
Now we want to estimate $\widehat \E_t^1-\widehat \E_t^2$ in $L^2$:
for this we start from the equation
$$
(\widehat \U_t^1)'' - (\widehat \U_t^2)''=e^{(\bar \U_t^1 +\widehat \U_t^1)/\e^2} - e^{(\bar \U_t^2 +\widehat \U_t^2)/\e^2}.
$$
Multiplying by $\widehat \U_t^1 - \widehat \U_t^2$ and integrating by parts, we get
\begin{align*}
0&=\int_{\mathbb T} \Bigl((\widehat \U_t^1)' - (\widehat \U_t^2)'\Bigr)^2\,dx
+\int_{\mathbb T} \left[e^{(\bar \U_t^1 +\widehat \U_t^1)/\e^2} - e^{(\bar \U_t^2 +\widehat \U_t^2)/\e^2}\right]
[\widehat \U_t^1 - \widehat \U_t^2]\,dx\\
&=\int_{\mathbb T} \Bigl((\widehat \U_t^1)' - (\widehat \U_t^2)'\Bigr)^2\,dx
+\int_{\mathbb T} \left[e^{(\bar \U_t^1 +\widehat \U_t^1)/\e^2} - e^{(\bar \U_t^1 +\widehat \U_t^2)/\e^2}\right]
[\widehat \U_t^1 - \widehat \U_t^2]\,dx\\
&\qquad + \int_{\mathbb T} \left[e^{(\bar \U_t^1 +\widehat \U_t^2)/\e^2} - e^{(\bar \U_t^2 +\widehat \U_t^2)/\e^2}\right]
[\widehat \U_t^1 - \widehat \U_t^2]\,dx.
\end{align*}
For the second term we observe that, by the Fundamental Theorem of Calculus,
$$
e^{(\bar \U_t^1 +\widehat \U_t^1)/\e^2} - e^{(\bar \U_t^1 +\widehat \U_t^2)/\e^2}
= \frac{1}{\e^2} \Black \biggl(\int_0^1 e^{[\bar \U_t^1 +\lambda \widehat \U_t^1+(1-\lambda)\widehat \U_t^2]/\e^2}\,d\lambda\biggr)\,[\widehat \U_t^1 - \widehat \U_t^2].
$$
Hence,
\begin{align*}
&\int_{\mathbb T} \left[e^{(\bar \U_t^1 +\widehat \U_t^1)/\e^2} - e^{(\bar \U_t^1 +\widehat \U_t^2)/\e^2}\right]
[\widehat \U_t^1 - \widehat \U_t^2]\,dx\\
&
=\int_{\mathbb T}  \frac{1}{\e^2} \Black   \biggl(\int_0^1 e^{[\bar \U_t^1 +\lambda \widehat \U_t^1+(1-\lambda)\widehat \U_t^2]/\e^2}\,d\lambda\biggr)\,(\widehat \U_t^1 - \widehat \U_t^2)^2dx\\
&\geq   \frac{1}{\e^2}   e^{-5/\e^2} \Black \int_{\mathbb T}(\widehat \U_t^1 - \widehat \U_t^2)^2dx
\end{align*}
where we used that  $\bar \U$ and $\widehat \U$ are bounded by $1$ and $4$, respectively.

For the third term, we simply estimate 
$$
|e^{(\bar \U_t^1 +\widehat \U_t^2)/\e^2} - e^{(\bar \U_t^2 +\widehat \U_t^2)/\e^2}| \leq  \frac{1}{\e^2} e^{5/\e^2} \Black |\bar \U_t^1 -\bar \U_t^2|,
$$
hence, combining all together,
\begin{align*}
&\int_{\mathbb T} \Bigl((\widehat \U_t^1)' - (\widehat \U_t^2)'\Bigr)^2\,dx
+   e^{-5/\e^2} \Black \int_{\mathbb T}(\widehat \U_t^1 - \widehat \U_t^2)^2dx\\
&\leq  \frac{1}{\e^2} e^{5/\e^2} \Black \int_{\mathbb T}  |\bar \U_t^1 -\bar \U_t^2|\,|\widehat \U_t^1 - \widehat \U_t^2|\,dx
\\
&\leq  \frac{1}{\e^2} e^{5/\e^2} \Black\delta  \int_{\mathbb T}  |\widehat \U_t^1 - \widehat \U_t^2|^2\,dx
+  \frac{1}{\e^2}  \frac{e^{5/\e^2}}{\delta} \Black \int_{\mathbb T}  |\bar \U_t^1 -\bar \U_t^2|^2\,dx.
\end{align*}
Thus, choosing  $\delta:=\e^2  e^{-10/\e^2}$ \Black %with $C$ as in the above equation
, we finally obtain
$$
\int_{\mathbb T} \Bigl((\widehat \U_t^1)' - (\widehat \U_t^2)'\Bigr)^2\,dx \leq   \frac{1}{\e^4} e^{15/\e^2} \Black  \int_{\mathbb T}  |\bar \U_t^1 -\bar \U_t^2|^2\,dx
$$
%for some universal constant $C$.
Observing now that $(\widehat \U_t^i)'=-\widehat \E_t^1$ and recalling \eqref{eq:bound U},
we obtain
\begin{equation}
\label{eq:continuity E}
\sqrt{\int_{\mathbb T} \Bigl(\widehat \E_t^1 - \widehat \E_t^2\Bigr)^2\,dx} \leq   \frac{1}{\e^2} e^{15/(2\e^2)} \Black \int_0^1|Z_t^1-Z_t^2|\,ds.
\end{equation}

Thanks to this estimate, we conclude that 
\begin{align*}
\int_0^1|\widehat \E_t^1(X_t^1)-\widehat \E_t^2(X_t^1)|\,ds&
=\int_{\mathbb T}|\widehat \E_t^1(x)-\widehat \E_t^2(x)|\,\rrho^1(t,x)\,dx\\
&\leq \|\rrho^1(t)\|_\infty\int_{\mathbb T}|\widehat \E_t^1-\widehat \E_t^2|\,dx\\
&\leq \|\rrho^1(t)\|_\infty\sqrt{\int_{\mathbb T}|\widehat \E_t^1-\widehat \E_t^2|^2\,dx}\\
&\leq   \frac{1}{\e^2} e^{15/(2\e^2)} \Black \|\rrho^1(t)\|_\infty \int_0^1|Z_t^1-Z_t^2|\,ds.
\end{align*}
Hence, combining all together we proved that
$$
\frac{d}{dt} \int_0^1|Z_t^1-Z_t^2|\,ds
\leq \Bigl(1+8\|\rrho^1(t)\|_\infty+\frac{3}{\e^2} + \frac{1}{\e^2} e^{15/(2\e^2)} \Black\|\rrho^1(t)\|_\infty \Bigr) \int_0^1|Z_t^1-Z_t^2|\,ds,
$$
so that, by Gronwall inequality,
$$
\int_0^1|Z_t^1-Z_t^2|\,ds
\leq e^{(1 +3/\e^2 )t +(8+ \frac{1}{\e^2} e^{15/(2\e^2)} \Black) \int_0^t\|\rrho^1(\tau)\|_\infty\,d\tau}\int_0^1|Z_0^1-Z_0^2|\,ds,
$$
which implies (recalling the discussion at the beginning of this computation)
\be
\label{eq:ws rescaled}
W_1(\f^1(t),\f^2(t))
\leq e^{(1 +3/\e^2 )t +(8+ \frac{1}{\e^2} e^{15/(2\e^2)} \Black) \int_0^t\|\rrho^1(\tau)\|_\infty\,d\tau}W_1(\f^1(0),\f^2(0)).
\ee
This proves the desired weak-strong stability for the rescaled system.

\subsection{Back to the original system and conclusion of the proof}
To obtain the weak-strong stability estimate for our original system,
we simply use \eqref{eq:ws rescaled} together with the definition of $W_1$.
More precisely, given two densities $f_1(x,v)$ and $f_2(x,v)$,
consider
$$
\f_i(x,v):=\frac1\e f_i(x,v/\e),\qquad i=1,2.
$$
Then
\begin{align*}
W_1(\f_1,\f_2)&=\sup_{\|\varphi\|_{\text{Lip}} \leq 1} \int \varphi(x,v) [\f_1(x,v) - \f_2(x,v)]\,dx\,dv\\
&=\sup_{\|\varphi\|_{\text{Lip}} \leq 1} \int \varphi(x,v) \frac{1}{\e}[f_1(x,v/\e) - f_2(x,v/\e)]\,dx\,dv\\
&=\sup_{\|\varphi\|_{\text{Lip}} \leq 1} \int \varphi(x,\e w) [f_1(x,w) - f_2(x,w)]\,dx\,dw.
\end{align*}
We now observe that if $\varphi$ is $1$-Lipschitz so is $\varphi(x,\e w)$ for $\e \leq 1$, hence
\begin{multline*}
\sup_{\|\varphi\|_{\text{Lip}} \leq 1} \int \varphi(x,\e w) [f_1(x,w) - f_2(x,w)]\,dx\,dw\\
\leq \sup_{\|\psi\|_{\text{Lip}} \leq 1} \int \psi(x,w) [f_1(x,w) - f_2(x,w)]\,dx\,dw=W_1(f_1,f_2).
\end{multline*}
Reciprocally, given any 1-Lipschitz function $\psi(x,w)$, the function $\varphi(x,w):=\e \psi(x,w/\e)$
is still 1-Lipschitz, hence 
\begin{multline*}
W_1(f_1,f_2)= \sup_{\|\psi\|_{\text{Lip}} \leq 1} \int \psi(x,w) [f_1(x,w) - f_2(x,w)]\,dx\,dw\\
\leq \sup_{\|\varphi\|_{\text{Lip}} \leq 1} \int \frac{1}{\e}\varphi(x,\e w) [f_1(x,w) - f_2(x,w)]\,dx\,dw = 
\frac{1}{\e} W_1(\f_1,\f_2).
\end{multline*}
Hence, in conclusion, we have
$$
\e W_1(f_1,f_2) \leq W_1(\f_1,\f_2) \leq W_1(f_1,f_2).
$$
In particular, when applied to solutions of  (VPME), we deduce that
\be
\label{eq:eps W1}
\e W_1(f_1(\e t),f_2(\e t)) \leq W_1(\f_1(t),\f_2(t)) \leq W_1(f_1(\e t),f_2(\e t)).
\ee
Observing that
$$
\int_0^t \|\rrho^1(\tau)\|_\infty\,d\tau = \int_0^t \|\rho^1(\e \tau)\|_\infty\,d\tau
= \frac{1}{\e} \int_0^{\e t} \|\rho^1(\tau)\|_\infty\,d\tau,
$$
\eqref{eq:eps W1} together with \eqref{eq:ws rescaled} gives
$$
W_1(f^1(t),f^2(t))
\leq \frac{1}{\e} e^{\frac1\e \left[(1+3/\e^2 )t +(8+ \frac{1}{\e^2} e^{15/(2\e^2)} \Black) \int_0^t\|\rho^1(\tau)\|_\infty\,d\tau\right]}W_1(f^1(0),f^2(0)),
$$
which concludes the proof of Theorem \ref{thm4}.

\begin{remark}
\label{rmk:VP ws}
Notice that, if we were working with the classical Vlasov-Poisson system, the stability estimate would have simply been
$$
W_1(\f^1(t),\f^2(t))
\leq e^{t +8 \int_0^t\|\rrho^1(\tau)\|_\infty\,d\tau}W_1(\f^1(0),\f^2(0)),
$$
(compare with \cite{HauX}), so in terms of $f$
$$
W_1(f^1(t),f^2(t))
\leq \frac{1}{\e} e^{\frac1\e \left[t +8 \int_0^t\|\rho^1(\tau)\|_\infty\,d\tau\right]}W_1(f^1(0),f^2(0)),
$$
\end{remark}

\section{Proof of Theorem~\ref{thm-exi}}
\label{sec:exi}

In this section, we prove the existence of global weak solutions in $\mathcal{P}_1(\bt \times \br)$ for the (VPME) system.
Without loss of generality we prove the existence of solutions when $\e=1$ (that is,
for \eqref{vpme}).%, the general case being
%completely analogous.

To prove existence of weak solutions we follow \cite[Proposition 1.2 and Theorem 1.7]{HauX}.
For this, we take a random variable $(X_0,V_0):[0,1]\to \mathbb T \times \br $ whose law is $f_0$, that is
$(X_0,V_0)_\# ds=f_0$, and we solve
\begin{equation}
\label{eq:VPLELappendix}
(VPME)_{L,2}:= \left\{ \begin{array}{cc}
\dot X_t=V_t,\\
\dot V_t=\bar \E(X_t) + \widehat \E(X_t),\\
\bar \E=-\bar \U',\qquad \widehat \E=-\widehat \U', \\
\bar \U''=1-\rho,\\
\widehat \U''=e^{(\bar \U+\widehat \U)} - 1,\\
\rho(t)=(X_t)_\# ds.
\end{array} \right.
\end{equation}
Indeed, once we have $(X_t,V_t)$, $f_t:=(X_t,V_t)_\# ds$ will be a solution to \eqref{vpme}.
We split the argument in several steps.\\

{\it Step 1: Solution of the $N$ particle system (compare with \cite[Proof of Proposition 1.2]{HauX}).}
We start from the $N$ particle systems of ODEs, $i=1,\ldots,N$,
$$
\left\{ \begin{array}{cc}
\dot X^i_t=V_t^i,\\
\dot V_t^i=\bar E(X_t^i) + \widehat E(X_t^i),\\
\bar E=-\bar U',\qquad \widehat E=-\widehat U', \\
\bar U''=\frac{1}{N}\sum_{i=1}^N \delta_{X^i} -1,\\
\widehat U''=e^{\bar U+\widehat U} - 1,\\
\end{array} \right.
$$
Because the electric field $\bar E(X^i)=-\frac{1}{N}\sum_{j\neq i} W'(X^i-X^j)$ is singular when $X^i=X^j$ for some $i \neq j$,
to prove existence we want to rewrite the above system as a differential inclusion
$$
\dot {\mathcal Z}^N(t) \in \mathcal B^N(\mathcal Z^N(t)),
$$
where $\mathcal B^N$ is a multivalued map from $\br^{2N}$ into the set of parts of $\br^{2N}$.
For this, we write
$$
\dot {\mathcal Z}^N(t) \in \mathcal B^N(\mathcal Z^N(t))
\quad \Leftrightarrow \quad \dot X^i=V^i,
\quad \dot V^i \in \frac{1}{N}\sum_{j=1}^N F_{ij}
$$
where 
$$
F_{ij}=-F_{ji}=-W'(X^i-X^j)+\widehat E(X^i) \quad \text{when $X^i \neq X^j$},
$$
$$
F_{ij}=-F_{ji} \in [\widehat E(X^i)-1/2, \widehat E(X^i)+1/2] \quad \text{when $X^i = X^j$}.
$$
As in \cite{HauX}, this equation is solved by Filippov's Theorem  (see \cite{Fil}) which provides existence of a solution,
and as shown in \cite[Step 2 of proof of Proposition 1.2]{HauX} the solution of the differential inclusion is a solution to the $N$ particle problem.\\

{\it Step 2: Approximation argument.}
To solve $(VPME)_{L,2}$ we approximate $f_0$ with a family of empirical measures
$$
f_0^N:=\frac{1}{N}\sum_{i=1}^N \delta_{(x^i,v^i)},
$$
that we can assume to satisfy (thanks to \eqref{eq:moment})
\begin{equation}
\label{eq:moment N}
\int |v|\,df_0^N(x,v)\leq C\qquad \forall\,N,
\end{equation}
and we apply Step 1 to solve the ODE system and find solutions $(X_t^N,V_t^N) \in \mathbb T\times \br$
of $(VPME)_{L,2}$ starting from an initial condition $(X^N_0,V^N_0)$ whose law is $f_0^N$.

Next, we notice that in \cite[Step 2, Proof of Theorem 1.7]{HauX} the only property on the vector field used in the proof is the fact that $F_{ij}$ are bounded by $1/2$, and it is used to show that
$$
\sup_{u,s \in [0,t]} \frac{|Z^N(u)-Z^N(u)|}{|s-u|} \leq |V_0^N|+ \frac{1}{2}(1+t),
$$
which, combined with \eqref{eq:moment N}, is enough to ensure tightness (see \cite[Step 2, Proof of Theorem 1.7]{HauX} for more details).
Since in our case the $F_{ij}$ are also bounded (as we are simply adding a bounded term $\widehat E$),
we deduce that for some $C>0$,
$$
\sup_{u,s \in [0,t]} \frac{|Z^N(u)-Z^N(u)|}{|s-u|} \leq |V_0^N|+ C(1+t),
$$
so the sequence $Z^N:=(X^N,V^N)$
is still tight and (up to a subsequence) converge to a process $Z=(X,V)$: it holds
\begin{equation}
\label{eq:tightness}
\int_0^1 \sup_{t \in [0,T]} \bigl|Z^N_t(s) - Z_t(s)\bigr| \,ds \to 0 \qquad \text{as $N \to \infty$}
\end{equation}
for any $T>0$.\\

{\it Step 3: Characterization of the limit process}.
We now want to prove that the limit process $Z_t=(X_t,V_t)$ is a solution of $(VPME)_{L,2}$.

Let us denote by $\bar E^N$ and $\widehat E^N$ the electric fields associated to the solution $(X^N,V^N)$.
Recall that $(X^N,V^N)$ solve
$$
\dot X^N=V^N,\qquad \dot V^N=\bar E^N(X^N)+\widehat E^N(X^N),
$$
or equivalently
$$
X^N_t=\int_0^t V^N_\tau\,d\tau,\qquad V^N_t=\int_0^t\bar E^N_\tau(X^N_\tau)+\widehat E^N_\tau(X^N_\tau)\,d\tau.
$$
In \cite[Step 3, Proof of Theorem 1.7]{HauX}, using \eqref{eq:tightness}, it is proved that 
$$\int_0^t \bar E^N_\tau(X^N_\tau)\rightarrow \int_0^t \bar E_\tau(X_\tau) \quad \text{ in  } L^1([0,1], \,ds),$$ 
so, to ensure that $(X,V)$ solves 
$$
X_t=\int_0^t V_\tau\,d\tau,\qquad V_t=\int_0^t\bar E_\tau(X_\tau)+\widehat E_\tau(X_\tau)\,d\tau,
$$
it suffices to show that, for any $\tau \geq 0$,
$$
\int_0^1 |\widehat E^N_\tau(X^N_\tau(s))-\widehat E_\tau(X_\tau(s))|\,ds \to 0 \qquad \text{as $N \to \infty$.}
$$
To show this we see that
\begin{align*}
\int_0^1 |\widehat E^N_\tau(X^N_\tau(s))-\widehat E_\tau(X_\tau(s))|\,ds& \leq 
\int_0^1 |\widehat E^N_\tau(X^N_\tau(s))-\widehat E_\tau(X_\tau^N(s))|\,ds\\
&+\int_0^1 |\widehat E_\tau(X^N_\tau(s))-\widehat E_\tau(X_\tau(s))|\,ds\\
&=:I_1+I_2
\end{align*}
For $I_2$ we use that $\widehat E_\tau$ is $M$-Lipschitz (recall Lemma \ref{lem:hatU})
to estimate
$$
I_2\leq M \int_0^1 |X^N_\tau(s)-X_\tau(s)|\,ds
$$
that goes to $0$ thanks to \eqref{eq:tightness}.

For $I_1$, we notice that the Cauchy-Schwarz inequality, \eqref{eq:continuity E}, and \eqref{eq:tightness} imply that, as $N \to \infty$,
\begin{align*}
\int_{\mathbb T} |\widehat E^N_\tau(x)-\widehat E_\tau(x)|\,dx& \leq
\sqrt{\int_{\mathbb T} |\widehat E^N_\tau(x)-\widehat E_\tau(x)|^2\,dx}\\
&\leq  
\bar C\int_0^1|Z_\tau^N(s)-Z_\tau(s)|\,ds \to 0.
\end{align*}
Hence, we know that $\widehat E^N_\tau$ converge to $\widehat E_\tau$ in $L^1(\mathbb T)$.
We now recall that $\{\widehat E^N_\tau\}_{N \geq 1}$ are $M$-Lipschitz, which implies by Ascoli-Arzel\`a
that, up to subsequences, they converge uniformly to some limit,  but by uniqueness of the limit they have to converge uniformly to  $\widehat E_\tau$.
Thanks to this fact we finally obtain
$$
I_1 \leq \sup_{x \in \mathbb T}|\widehat E^N_\tau(x)-\widehat E_\tau(x)| \to 0,
$$
which concludes the proof.

\section{Proofs of Theorems \ref{thm1} and \ref{thm2}}
\label{sec:proofs}

Our aim is now to prove Theorems \ref{thm1} and \ref{thm2}. 
The principle is first to adapt some results from \cite{Gr96} for the (VPME) system in terms of the $W_1$ distance, which allows
us to settle the case where $h_{\e,0}=0$.
In a second time, we apply the stability estimate of Theorem \ref{thm4}.

\subsection{The fluid point of view and convergence for uniformly analytic initial data}
\label{subsec:grenier}
We describe in this section the approach introduced by Grenier in \cite{Gr96} for the study of the quasineutral limit for the classical Vlasov-Poisson system.
As we shall see, this can be adapted without difficulty to (VPME)$_\e$.

We assume that, for all $\e \in (0,1)$, $g_{0,\e}(x,v)$ is a \emph{continuous} function; following Grenier \cite{Gr96}, we write each initial condition as a ``superposition of Dirac masses in velocity'':
$$
g_{0, \e}(x,v) = \int_\M \rho_{0,\e}^\theta(x) \delta_{v= v_{0,\e}^\theta(x)} \, d\mu(\theta)
$$
with $\M:= \br$, $d\mu(\theta) = \frac{1}{\pi} \frac{d\theta}{1+\theta^2}$, 
$$\rho_{0, \e}^\theta= \pi (1+ \theta^2) g_{0,\e}(x,\theta), \quad  v_{0,\e}^\theta = \theta.$$
This leads to the study of the behavior as $\e\to0$ for solutions to the multi-fluid pressureless Euler-Poisson system 
\begin{equation}
\label{fluid}
 \left\{ \begin{array}{ccc}\pt_t \rho_\e^\theta+ \pt_x (\rho_\e^\theta v_\e^\theta)=0,  \\
\pt_t v_\e^\theta + v_\e^\theta \pt_x v_\e^\theta = E_\e, \\
E_\e=-U_\e', \\
\e^2 U_\e''=e^{U_\e}- \int_\M \rho_\e^\theta \, d\mu(\theta),\\
\rho_\e^\theta\vert_{t=0}= \rho_{0,\e}^\theta, v_\e^\theta\vert_{t=0}= v_{0,\e}^\theta.
\end{array} \right.
\end{equation}
One then checks that defining
$$g_\e(t,x,v) = \int_\M \rho_\e^\theta(t,x) \delta_{v= v_\e^\theta(t,x)} \, d\mu(\theta)$$
gives a weak solution to \eqref{vpme-quasi} (as an application of Theorem~\ref{thm4} and of the subsequent estimates, this is actually \emph{the unique} weak solution to \eqref{vpme-quasi} with initial datum $g_{0,\e}$).

The formal limit system, which is associated to the kinetic isothermal Euler system, is the following multi fluid isothermal Euler system:
\begin{equation}
\label{limit-fluid}
\left\{ \begin{array}{ccc}\pt_t \rho^\theta+ \pt_x (\rho^\theta v^\theta)=0,  \\
\pt_t v^\theta + v^\theta \pt_x v^\theta = E, \\
E=-U', \\
U= \log\left( \int_\M \rho^\theta \, d\mu(\theta)\right),\\
\rho^\theta\vert_{t=0}= \rho_{0}^\theta, v^\theta\vert_{t=0}= v_{0}^\theta,
\end{array} \right.
\end{equation}
where the $\rho_0^\theta$ are defined as the limits of $\rho_{0,\e}^\theta$ (which are thus supposed to exist)  and $v_0^\theta=\theta$. 

As before, one checks that defining
$$g(t,x,v) = \int_\M \rho^\theta(t,x) \delta_{v= v^\theta(t,x)} \, d\mu(\theta)$$
gives a weak solution to the kinetic Euler isothermal system.

Recalling the analytic norms used by Grenier in \cite{Gr96}
(see Definition \ref{def:norm}), we can adapt
the results of \cite[Theorems 1.1.2, 1.1.3 and Remark 1 p. 369]{Gr96} to get the following proposition. 

\begin{prop}
\label{grenier}
Assume that there exist $\delta_0,C,\eta>0$, with $\eta$ small enough,  such that
$$
\sup_{\e\in (0,1)}\sup_{v \in \br} (1+v^2) \| g_{0,\e} (\cdot,v)\|_{B_{\delta_0}}  \leq C,
$$
and that 
$$
\sup_{\e\in (0,1)} \left\|  \int_\br g_{0,\e}(\cdot,v) \, dv  -1 \right\|_{B_{\delta_0}} < \eta.
%\inf_{\e\in (0,1)}\inf_{x \in \bt} \int_\br g_{0,\e}(x,v) \, dv  >0.
$$
%We also assume that $\e E_{0,\e}$ and for all $v \in \br$,   $ g_{0,\e} (x,v)$ have a weak limit in the sense of distributions that we denote $\rho_0^\theta$. We set on the other hand $v_{0}^\theta = \theta - \lim_{\e\to0} j_{0,\e}$.
%
Denote for all $\theta \in \br$,
$$\rho_{0, \e}^\theta= \pi (1+ \theta^2) g_{0,\e}(x,\theta), \quad  v_{0,\e}^\theta = v^\theta= \theta.$$
Assume that for all $\theta \in \br$, $\rho_{0,\e}^\theta$ has a limit in the sense of distributions and denote
$$
\rho_0^\theta= \lim_{\e \to 0} \rho_{0,\e}^\theta.  
$$
Then there exist $\delta_1>0$ and $T>0$ such that:
\begin{itemize}
\item for all $\e \in (0,1)$, there is a unique solution $(\rho_\e^\theta, v_\e^\theta)_{\theta \in M}$ of \eqref{fluid} with initial data $(\rho_{0,\e}^\theta, v_{0,\e}^\theta)_{\theta \in M}$, such that $\rho_\e^\theta, v_\e^\theta \in C([0,T]; B_{\delta_1})$ for all $\theta \in M$ and $\e \in (0,1)$, with bounds that are uniform in $\e$;
\item there is a unique solution $(\rho^\theta, v^\theta)_{\theta \in M}$ of \eqref{limit-fluid} with initial data $(\rho_{0}^\theta, v_{0}^\theta)_{\theta \in M}$, such that $\rho^\theta, v^\theta \in C([0,T]; B_{\delta_1})$
 for all $\theta \in M$;
\item for all $s \in \mathbb{N}$, we have
\be
\label{eq:conv}
\sup_{\theta \in M} \sup_{t \in [0,T]} \left[ \| \rho_\e^\theta - \rho^\theta\|_{H^s (\bt)} +  \| v_\e^\theta - v^\theta\|_{H^s (\bt)} \right] \to_{\e \to 0 } 0.
\ee

\end{itemize}

\end{prop}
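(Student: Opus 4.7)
The plan is to follow the scheme introduced by Grenier in \cite{Gr96} for the classical Vlasov-Poisson system, the only new input being the analysis of the semilinear Poisson equation $\e^2 U_\e'' = e^{U_\e} - \rho_\e$ in the analytic norm $B_\delta$, uniformly in $\e \in (0,1)$. Once this is done, the multi-fluid system \eqref{fluid} fits into Grenier's abstract Cauchy-Kovalevskaya framework on the scale of spaces $\{B_{\delta}\}_{\delta>0}$, producing uniform bounds on $[0,T]$, and the convergence to \eqref{limit-fluid} follows from a Gronwall argument on the differences.

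The first step, which I expect to be the main obstacle, is to establish an \emph{analytic Poisson estimate}, uniform in $\e$. Given $\rho$ with $\int_\bt \rho\,dx = 1$ and $\|\rho - 1\|_{B_\delta}$ sufficiently small, I want to show that the unique solution $U_\e$ of $\e^2 U_\e'' = e^{U_\e} - \rho$ satisfies
\[
\| U_\e - \log \rho \|_{B_{\delta'}} \leq C\,\e^2\, \|(\log \rho)''\|_{B_\delta},
\]
for some $\delta' < \delta$, with a constant $C$ independent of $\e$. Setting $U_\e = \log \rho + W_\e$, the remainder solves
\[
\e^2 W_\e'' - A_\e(W_\e)\,W_\e = -\e^2 (\log \rho)'',\qquad A_\e(W_\e) := \rho \int_0^1 e^{s W_\e}\,ds,
\]
where $A_\e$ stays pointwise positive and bounded away from $0$ for small $\|W_\e\|_\infty$. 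The linearized operator $\e^2 \partial_x^2 - A_\e$ has, on the Fourier side, a symbol dominated by $A_\e^{-1}$ (independent of $\e$), so a contraction mapping argument in $B_{\delta'}$ around the linear problem $\e^2 W_\e'' - W_\e = -\e^2(\log\rho)''$ closes the fixed point. Differentiating, the electric field satisfies $\|E_\e - (-(\log\rho)')\|_{B_{\delta'}} = O(\e^2)$.

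Equipped with this Poisson estimate, the well-posedness and uniform bounds for the multi-fluid system \eqref{fluid} follow as in \cite{Gr96}. Recall that $B_\delta$ is a Banach algebra and satisfies the derivative loss estimate $\|\partial_x f\|_{B_{\delta'}} \leq (\delta-\delta')^{-1}\|f\|_{B_\delta}$. Working in the shrinking scale $B_{\delta(t)}$ with $\delta(t) := \delta_1 - \kappa t$, one obtains, for some $T>0$ and $\delta_1 \in (0,\delta_0)$ independent of $\theta$ and $\e$, a unique solution $(\rho_\e^\theta, v_\e^\theta) \in C([0,T]; B_{\delta_1})$ of \eqref{fluid}, with bounds uniform in $\e$ and in $\theta \in \M$; the uniformity in $\e$ is precisely what the Poisson estimate above provides. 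The same argument, with the Poisson step replaced by the explicit formula $U = \log(\int_\M \rho^\theta \, d\mu(\theta))$, yields a solution $(\rho^\theta, v^\theta)$ of \eqref{limit-fluid} on $[0,T]$. Finally, writing the equations satisfied by the differences $\rho_\e^\theta - \rho^\theta$ and $v_\e^\theta - v^\theta$ and observing that the forcing term differs by $O(\e^2)$ in $B_{\delta_1}$ thanks to the Poisson estimate, a Gronwall argument in a slightly smaller analytic scale $B_{\delta_2}$, $\delta_2 < \delta_1$, gives convergence to $0$ in $B_{\delta_2}$. Since $B_{\delta_2} \hookrightarrow H^s(\bt)$ continuously for every $s \in \bn$ (the exponential decay of Fourier coefficients controls all Sobolev norms), this yields \eqref{eq:conv}.
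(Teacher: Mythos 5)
Your proposal follows essentially the same route the paper sketches: the crucial new ingredient is the uniform-in-$\e$ boundedness of $(Id-\e^2\partial_{xx})^{-1}$ on the analytic scale together with the smallness hypothesis on $\rho-1$ to absorb the nonlinear remainder, after which one runs Grenier's Cauchy-Kovalevskaya scheme on shrinking analytic scales (without correctors, since there are no fast plasma oscillations here) and closes the convergence by Gronwall plus the embedding $B_{\delta_2}\hookrightarrow H^s$. The only (cosmetic) deviation is that you linearize around $\log\rho$ via $U_\e = \log\rho + W_\e$, whereas the paper writes $(Id-\e^2\partial_{xx})U_\e = \rho - 1 - (e^{U_\e}-U_\e-1)$ and treats the parenthesized term as the quadratic error; also, the clause about a ``Fourier symbol'' of $\e^2\partial_x^2 - A_\e$ is imprecise since $A_\e$ is a variable coefficient, but this is harmless because you immediately set up the contraction around the constant-coefficient operator $\e^2\partial_x^2 - 1$.
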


Remark that analyticity is actually needed only in the position variable, and not in the velocity variable.
This allows us, for instance, to consider initial data which are compactly supported in velocity.

We shall not give a complete proof of this result (which is of Cauchy-Kovalevski type), since it is very close to the one given by Grenier in \cite{Gr96} for the classical Vlasov-Poisson system, but we just emphasize the main differences.

First of all we begin by noticing that one difficulty in the classical case comes from the fact the one can not directly use the Poisson equation
$$-\e^2  U''_\e = \rho_\e - 1$$
if one wants some useful uniform analytic estimates for the electric field. Because of this issue, a combination of the Vlasov and Poisson equation is used in \cite{Gr96}, which allows one to get a kind of wave equation solved by $U_\e$.
This shows in particular that the electric field has a highly oscillatory behavior in time (the fast oscillations in time correspond to the so-called plasma waves) which have to be filtered in order to obtain convergence. For this reason, Grenier needs to introduce some correctors in order to get convergence of the velocity fields
(these oscillations and correctors vanish only if the initial conditions are well-prepared, \emph{i.e.} verify some compatibility conditions).

For the (VPME) system, that is when one adds the exponential term in the Poisson equation, such a problem does not occur. To explain this, consider first the linearized Poisson equation
$$-\e^2  U''_\e + U_\e =  \int_\M \rho_\e^\theta \, d\mu(\theta) - 1$$
and observe that this equation is appropriate to get uniform analytic estimates. Indeed, writing
$$
U_\e = (Id - \e^2 \pt_{xx})^{-1} \left(  \int_\M \rho_\e^\theta \, d\mu(\theta)- 1\right),
$$
this shows that if $\rho_\e$ is analytic then also $U_\e$ (and so $E_\e$)
is analytic, which implies that there are no fast oscillations in time, contrary to the classical case.
In particular, our convergence result holds without the need of adding any correctors.

A second difference concerns the existence of analytic solutions on an interval of time $[0,T]$
independent of $\e$: the construction of Grenier of analytic solutions
is based on a Cauchy-Kovalevski type proof based on an iteration procedure in a scale of Banach spaces
(see \cite[Section 2.1]{Gr96}). Most of the estimates used to prove that such iteration converge 
use the Fourier transform, that is unavailable in our case since the Poisson equation
$$-\e^2  U''_\e + e^{U_\e} =  \int_\M \rho_\e^\theta \, d\mu(\theta)$$
is nonlinear.
However, since we deal with analytic functions, we can express everything in power series 
to use the Fourier transform and obtain some a priori estimates in the analytic norm. Furthermore, one can write the Poisson equation as
$$-\e^2  U''_\e + U_\e =  \int_\M \rho_\e^\theta \, d\mu(\theta) - 1 - (e^{U_\e} -  U_\e -1)$$
and rely on the fact that the ``error term'' $(e^{U_\e} -  U_\e -1)$ is quadratic in $U_\e$ (which is expected to be small in the regime where $\sup_{\e\in (0,1)} \left\|  \int_\M \rho_\e^\theta \, d\mu(\theta)   -1 \right\|_{B_{\delta_0}} \ll 1$), and thus can be handled in the approximation scheme used in \cite{Gr96}.

\bigskip

We deduce the next corollary.
\begin{cor}
\label{cor:1}
With the same assumptions and notation as in Proposition \ref{grenier}, we have
\be
\label{W1-0}
\sup_{t \in [0,T]} W_1(g_\e (t), g(t)) \to_{\e\to0} 0,
\ee
where
\be
\label{g1}
g_\e(t,x,v) = \int_\M \rho_\e^\theta(t,x) \delta_{v= v_\e^\theta(t,x)} \, d\mu(\theta), \qquad g(t,x,v) = \int_\M \rho^\theta(t,x) \delta_{v= v^\theta(t,x)} \, d\mu(\theta).
\ee
\end{cor}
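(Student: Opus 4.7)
The plan is to use the Kantorovich--Rubinstein duality and exploit the fact that both $g_\e(t)$ and $g(t)$ admit the same parametrization over $(\M, d\mu)$ via the multi-fluid variables. For any $1$-Lipschitz $\varphi: \bt\times\br\to\br$, normalized so that $\varphi(0,0)=0$, I would evaluate $\int\varphi\,d(g_\e(t)-g(t))$ by substituting the explicit representation \eqref{g1}, and then split the result into two pieces,
\begin{align*}
A &:= \int_\M d\mu(\theta)\int_\bt \bigl[\varphi(x,v_\e^\theta)-\varphi(x,v^\theta)\bigr]\,\rho_\e^\theta\,dx,\\
B &:= \int_\M d\mu(\theta)\int_\bt \varphi(x,v^\theta)\,(\rho_\e^\theta-\rho^\theta)\,dx,
\end{align*}
so that $A$ captures the velocity discrepancy and $B$ the density discrepancy.

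For $A$, the $1$-Lipschitz property of $\varphi$ and the mass normalization $\int_\M \|\rho_\e^\theta\|_{L^1(\bt)}\,d\mu(\theta)=1$ reduce matters to controlling $\sup_\theta \|v_\e^\theta-v^\theta\|_{L^\infty(\bt)}$. Applying \eqref{eq:conv} with $s=1$ together with the 1D Sobolev embedding $H^1(\bt)\hookrightarrow L^\infty(\bt)$ yields $\sup_{\theta\in\M}\sup_{t\in[0,T]}\|v_\e^\theta(t)-v^\theta(t)\|_{L^\infty}\to 0$ as $\e\to 0$, so $A$ vanishes uniformly in $t\in[0,T]$.

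For $B$, the pointwise bound $|\varphi(x,v^\theta(x))|\le \tfrac12+|v^\theta(x)|\le \tfrac12+|\theta|+\|v^\theta-\theta\|_{L^\infty}$ together with the uniform analytic control of $v^\theta-\theta$ built into Proposition~\ref{grenier} gives $|B|\lesssim \int_\M (1+|\theta|)\,\|\rho_\e^\theta-\rho^\theta\|_{L^1(\bt)}\,d\mu(\theta)$. Sobolev embedding and \eqref{eq:conv} make $\sup_\theta \|\rho_\e^\theta(t)-\rho^\theta(t)\|_{L^1(\bt)}\to 0$ uniformly on $[0,T]$.

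The main obstacle, and the step requiring genuine care, is the $\theta$-integral in $B$, since $(1+|\theta|)\,d\mu$ is not integrable on $\br$. I would resolve this by a truncation argument, splitting $\M$ into $\{|\theta|\le R\}$ (where the uniform-in-$\theta$ smallness of $\|\rho_\e^\theta-\rho^\theta\|_{L^1}$ handles the finite weight $\int_{|\theta|\le R}(1+|\theta|)\,d\mu$) and $\{|\theta|>R\}$ (where the uniform bound $\|\rho_\e^\theta\|_{L^1}+\|\rho^\theta\|_{L^1}\le 2\pi C$, combined with the finite first moment of the ambient initial data $f_{0,\e}\in\mathcal{P}_1(\bt\times\br)$ propagated by the fluid equations, makes the tail contribution arbitrarily small uniformly in $\e$). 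Choosing $R=R(\e)\to\infty$ at a suitably slow rate as $\e\to 0$ then completes the estimate and yields $\sup_{t\in[0,T]}W_1(g_\e(t),g(t))\to 0$.
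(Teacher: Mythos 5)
Your proof uses the same Kantorovich duality and the same $A/B$ decomposition as the paper; the treatment of $A$ (via the uniform $L^\infty$ bound on $\rho_\e^\theta$ and the uniform convergence $v_\e^\theta\to v^\theta$) and the use of mass conservation to kill the constant $\varphi(0,0)$ both match the paper's proof. What differs is that you explicitly flag the $\theta$-integrability problem in $B$: the weight $(1+|\theta|)\,d\mu(\theta)$ is not integrable on $\br$. This is a genuine subtlety that the paper's written proof glosses over---it bounds $B$ by $\int_\M\|\rho_\e^\theta-\rho^\theta\|_\infty\,d\mu(\theta)\cdot\bigl(1/2+\sup_{\theta\in\M}\|v^\theta\|_\infty\bigr)$, and the constant $\sup_{\theta}\|v^\theta\|_\infty$ is in fact infinite, since $v_0^\theta=\theta$ ranges over all of $\br$. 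So your instinct to pause here is correct.

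The truncation you propose, however, does not close the gap as stated. The tail weight $\int_{|\theta|>R}(1+|\theta|)\,d\mu(\theta)$ diverges logarithmically in $R$, so the uniform $L^1$-bound $\|\rho_\e^\theta\|_{L^1}+\|\rho^\theta\|_{L^1}\le 2\pi C$ yields no smallness on $\{|\theta|>R\}$, and the appeal to $f_{0,\e}\in\mathcal{P}_1(\bt\times\br)$ does not rescue it. First, $\mathcal{P}_1$ membership gives only finiteness of the first moment, not a bound uniform in $\e$, and it concerns $f_{0,\e}=g_{0,\e}+h_{0,\e}$ rather than the fluid part $g_{0,\e}$ alone. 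Second, and more fundamentally, even a uniform first-moment bound of the form $\sup_{\e,t}\int_\M(1+|\theta|)\|\rho_\e^\theta(t)\|_{L^1}\,d\mu(\theta)<\infty$ gives uniform \emph{boundedness} but not uniform \emph{smallness} of tails: for the truncation to deliver $\int_{|\theta|>R}(1+|\theta|)\|\rho_\e^\theta-\rho^\theta\|_{L^1}\,d\mu(\theta)\to 0$ uniformly in $\e$ as $R\to\infty$, you need uniform integrability, i.e., a uniformly bounded strictly higher weighted moment, or a $\theta$-decay of $\rho_\e^\theta-\rho^\theta$ strictly stronger than the uniform-in-$\theta$ convergence supplied by Proposition~\ref{grenier}. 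Without that extra ingredient---which would have to be extracted from Grenier's scheme or imposed on the data---the tail estimate is not justified, and choosing $R=R(\e)\to\infty$ slowly does not help.
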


\begin{proof}
The convergence \eqref{W1-0} follows from \eqref{eq:conv}, and the Sobolev embedding theorem. We have indeed for all $t\in [0,T]$:%to the Kantorovich-Rubinstein theorem which allows to write
\begin{align*}
&W_1(g_\e (t), g(t)) = \sup_{\|\varphi\|_{\text{Lip}} \leq 1} \langle g_\e -g, \, \varphi \rangle \\%\left\{ \int_{\bt \times \br} (g_\e(t,x,v) - g(t,x,v)) \varphi(x,v) \, dv dx \right\} \\
&=  \sup_{\|\varphi\|_{\text{Lip}} \leq 1} \left\{ \int_{\bt }  \int_\M (\rho_\e^\theta(t,x)\varphi(x,v_\e^\theta(t,x)) - \rho^\theta(t,x)\varphi(x,v^\theta(t,x))) \, d\mu(\theta)  \,  dx \right\} \\
&=\sup_{\|\varphi\|_{\text{Lip}} \leq 1} \left\{ \int_{\bt }  \int_\M \rho_\e^\theta(t,x)(\varphi(x,v_\e^\theta(t,x))-\varphi(x,v^\theta(t,x))  \, d\mu(\theta)  \,  dx \right\}  \\
&+ \sup_{\|\varphi\|_{\text{Lip}} \leq 1} \left\{ \int_{\bt}  \int_\M (\rho_\e^\theta(t,x) - \rho^\theta(t,x))\varphi(x,v^\theta(t,x)) \, d\mu(\theta)  \,  dx \right\}.
\end{align*}
Thus, we deduce the estimate
\begin{align*}
W_1(g_\e (t), g(t)) 
&\leq \sup_{\|\varphi\|_{\text{Lip}} \leq 1}  \sup_{\e \in (0,1), \, \theta \in M} \|\rho_\e^\theta\|_{\infty} \|\varphi\|_{\text{Lip}} \int_\M \| v_\e^\theta(t,x)-v^\theta(t,x)\|_\infty  \, d\mu(\theta) \\
&+ \sup_{\|\varphi\|_{\text{Lip}} \leq 1}  \int_\M \|\rho_\e^\theta- \rho_\theta\|_{\infty} \, d\mu(\theta) \|\varphi\|_{\text{Lip}}  \left(1/2+ \sup_{\theta \in M} \|v^\theta(t,x)\|_\infty\right) \\
&+  \sup_{\|\varphi\|_{\text{Lip}} \leq 1} \left\{ \int_{\bt \times \br}  \int_\M (\rho_\e^\theta(t,x) - \rho^\theta(t,x))\varphi(0,0) \, d\mu(\theta)  \,  dx \right\}.
\end{align*}
We notice that the last term is equal to $0$ since for all $t \geq 0$,
$$
\int_\bt \int_\M \rho_\e^\theta(t,x) \, d\mu(\theta) \, dx =\int_\bt \int_\M \rho_\e^\theta(t,x) \, d\mu(\theta) \, dx =1,
$$
by conservation of the total mass. After taking the supremum in time, we also see that the other two terms converge to $0$, using the $L^\infty$ convergence of $(\rho_\e^\theta, v_\e^\theta)$ to $(\rho_{0}^\theta, v_{0}^\theta)$. %for all $\theta \in M$, and the Lebesgue domination theorem.
This concludes the proof.
\end{proof}

\bigskip

This approach is also relevant for singular initial data such as the sum of Dirac masses in velocity:
$$
g_{0,\e}(x,v) = \sum_{i=1}^N \rho_{0,\e}^i(x) \delta_{v=v_{0,\e}^i(x)}.
$$
and we have a similar theorem assuming that  $(\rho_{0,\e}^i, v_{0,\e}^i)$ is uniformly analytic.

In this case $\M= \{1,\cdots,N\}$ and $d\mu$ is the counting measure. This leads to the study of the behavior as $\e \to 0$ of the system (for $i \in  \{1,\cdots,N\}$)
\begin{equation}
\label{eq:Ndirac}
\left\{ \begin{array}{ccc}\pt_t \rho^i_\e+ \pt_x (\rho^i_\e v^i_\e)=0,  \\
\pt_t v^i_\e+ v^i_\e \pt_x v^i_\e = E_\e, \\
E_\e=-U'_\e, \\
\e^2 U''_\e = e^{U_\e}- \left(  \sum_{i=1}^N  \rho^i_\e \right),\\
\rho^i_\e\vert_{t=0}= \rho_{0,\e}^i, v^i_\e\vert_{t=0}= v_{0,\e}^i.
\end{array} \right.
\end{equation}
and the formal limit is the following multi fluid isothermal system
\begin{equation}
\label{eq:Ndiraclimit}
\left\{ \begin{array}{ccc}\pt_t \rho^i+ \pt_x (\rho^i v^i)=0,  \\
\pt_t v^i + v^i \pt_x v^i = E, \\
E=-U', \\
U= \log\left(  \sum_{i=1}^N  \rho^i \right),\\
\rho^i\vert_{t=0}= \rho_{0}^i, v^i\vert_{t=0}= v_{0}^i.
\end{array} \right.
\end{equation}

As before,  adapting the arguments in \cite{Gr96}, we obtain the following proposition and its corollary.
\begin{prop}
\label{grenier2}
Assume that there exist $\delta_0,C,\eta>0$, with $\eta$ small enough, such that
$$
\sup_{\e\in (0,1)}\sup_{i \in \{ 1, \cdots , N\}} \| \rho^i_{0,\e}\|_{B_{\delta_0}} +  \| v^i_{0,\e}\|_{B_{\delta_0}}   \leq C,
$$
and that 
$$
\sup_{\e\in (0,1)} \left\|  \sum_{i=1}^N  \rho^i_{0,\e}   -1 \right\|_{B_{\delta_0}} < \eta.
%\inf_{\e\in (0,1)}\inf_{x \in \bt} \sum_{i \in \{ 1, \cdots , N\}} \rho^i_{0,\e}(x,v)  >0.
$$
%We also assume that $\e E_{0,\e}$ and for all $v \in \br$,   $ g_{0,\e} (x,v)$ have a weak limit in the sense of distributions that we denote $\rho_0^i$. We set on the other hand $v_{0}^i = i - \lim_{\e\to0} j_{0,\e}$.
%
Assume that for all $i=1,\cdots, N$, $\rho^i_{0,\e}, v^i_{0,\e}$ admit a limit in the sense of distributions and denote
$$
\rho^i_{0} =\lim_{\e \to 0} \rho^i_{0,\e}, \quad v^i_{0} = \lim_{\e \to 0} v^i_{0,\e}.
$$
Then there exist $\delta_1>0$ and $T>0$ such that:
\begin{itemize}
\item for all $\e \in (0,1)$, there is a unique solution $(\rho_\e^i, v_\e^i)_{i \in \{ 1, \cdots , N\}}$ of \eqref{eq:Ndirac} with initial data $(\rho_{0,\e}^i, v_{0,\e}^i)_{i \in \{ 1, \cdots , N\}}$, such that $\rho_\e^i, v_\e^i \in C([0,T]; B_{\delta_1})$  for all $i \in \{ 1, \cdots , N\}$ and $\e \in (0,1)$, with bounds that are uniform in $\e$;
\item there is a unique solution $(\rho^i, v^i)_{i \in \{ 1, \cdots , N\}}$ of \eqref{eq:Ndiraclimit} with initial data $(\rho_{0}^i, v_{0}^i)_{i \in \{ 1, \cdots , N\}}$, such that  $\rho^i, v^i \in C([0,T]; B_{\delta_1})$
for all $i \in \{ 1, \cdots , N\}$;
\item for all $s \in \mathbb{N}$, we have
$$
\sup_{i \in \{ 1, \cdots , N\}} \sup_{t \in [0,T]} \left[ \| \rho_\e^i - \rho^i\|_{H^s (\bt)} +  \| v_\e^i - v^i\|_{H^s (\bt)} \right] \to_{\e \to 0 } 0.
$$
\end{itemize}

\end{prop}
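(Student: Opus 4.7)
The proof parallels the continuous-index case of Proposition \ref{grenier} with $\M = \{1, \ldots, N\}$ and $d\mu$ the counting measure; the only change is that integrals $\int_\M \cdots \, d\mu(\theta)$ become finite sums $\sum_{i=1}^N$, which does not affect any of the analytic estimates. The main ingredients I would assemble are: a reformulation of the Poisson equation that is suitable for uniform-in-$\e$ analytic estimates, a Cauchy-Kovalevski iteration in a scale of Banach spaces $B_{\delta(t)}$, and a Gronwall-type stability estimate to pass to the limit $\e \to 0$.

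First, I would reformulate the Poisson equation in \eqref{eq:Ndirac} by setting $\sigma_\e := \sum_{i=1}^N \rho_\e^i - 1$ and writing
$$
(I - \e^2 \partial_{xx}) U_\e = \sigma_\e - (e^{U_\e} - U_\e - 1).
$$
The operator $T_\e := (I - \e^2 \partial_{xx})^{-1}$ has Fourier symbol $(1+\e^2 k^2)^{-1} \leq 1$, so it is bounded on every $B_\delta$ with norm $\leq 1$ uniformly in $\e$. Since $B_\delta$ is a Banach algebra, the map $U \mapsto e^U - U - 1 = \sum_{k \geq 2} U^k/k!$ satisfies a quadratic bound $\|e^U - U - 1\|_{B_\delta} \leq \|U\|_{B_\delta}^2 e^{\|U\|_{B_\delta}}$. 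Provided $\|\sigma_\e\|_{B_\delta} < \eta$ with $\eta$ sufficiently small, a Banach fixed-point argument on a small ball of $B_\delta$ produces a unique solution $U_\e$ with $\|U_\e\|_{B_\delta} \leq 2\|\sigma_\e\|_{B_\delta}$, and the electric field $E_\e = -\partial_x U_\e$ is controlled in $B_{\delta'}$ for any $\delta' < \delta$ via the loss estimate $\|\partial_x f\|_{B_{\delta'}} \leq (\delta - \delta')^{-1} \|f\|_{B_\delta}$.

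Second, I would run a Cauchy-Kovalevski iteration as in \cite{Gr96}: choosing a shrinking analyticity radius $\delta(t) := \delta_1 + \lambda(T - t)$, I would define $(\rho_\e^{i,(n+1)}, v_\e^{i,(n+1)})$ by
\begin{align*}
\partial_t \rho_\e^{i,(n+1)} + \partial_x(\rho_\e^{i,(n)} v_\e^{i,(n)}) &= 0, \\
\partial_t v_\e^{i,(n+1)} + v_\e^{i,(n)} \partial_x v_\e^{i,(n)} &= E_\e^{(n)},
\end{align*}
where $E_\e^{(n)}$ is obtained from $\sigma_\e^{(n)} := \sum_i \rho_\e^{i,(n)} - 1$ by the construction of the first step. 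Combining the algebra property, the loss-of-derivative estimate, and an induction propagating the smallness of $\sigma_\e^{(n)}$, I would choose $\lambda$ large and $T > 0$ small, both independent of $\e$, so that the iterates form a Cauchy sequence in $C([0,T]; B_{\delta_1})$ converging to a solution of \eqref{eq:Ndirac} with uniform-in-$\e$ bounds. The same construction with $T_\e$ replaced by the solution operator of $U = \log(1 + \sigma)$ (also well defined on small balls of $B_\delta$ by the algebra structure) yields the limit solution of \eqref{eq:Ndiraclimit} in the same space.

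Finally, for the $H^s$ convergence, I would derive a Gronwall inequality for $(\rho_\e^i - \rho^i, v_\e^i - v^i)$ in a slightly smaller analytic space $B_{\delta_2}$ with $\delta_2 < \delta_1$. The only $\e$-dependence in the source term comes from $(T_\e - I) f = \e^2 T_\e f''$, whose $B_{\delta_2}$-norm is bounded by $\e^2 (\delta_1 - \delta_2)^{-2} \|f\|_{B_{\delta_1}}$ and hence vanishes as $\e \to 0$ on analytic data. This yields uniform $B_{\delta_2}$-convergence on $[0,T]$, from which the $H^s$-convergence follows via the continuous embedding $B_{\delta_2} \hookrightarrow H^s(\bt)$ valid for every $s$. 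I expect the main technical obstacle to be the uniform control, across the iteration and across $\e$, of the nonlinear Poisson solver; this is precisely where the smallness hypothesis $\eta \ll 1$ is indispensable, as it keeps the map $U \mapsto T_\e \sigma - T_\e(e^U - U - 1)$ a contraction on a small ball of $B_{\delta(t)}$ throughout the time interval.
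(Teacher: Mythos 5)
Your proposal follows essentially the same route as the paper's (sketched) proof: the paper reduces Proposition~\ref{grenier2} to the discrete-index variant of Proposition~\ref{grenier}, and its discussion in Section~\ref{subsec:grenier} highlights exactly your three ingredients — rewriting the Poisson equation as $(I-\e^2\pt_{xx})U_\e = \sigma_\e - (e^{U_\e}-U_\e-1)$ with the uniformly bounded resolvent and quadratic error term controlled by the smallness of $\eta$, a Cauchy--Kovalevski iteration in the scale $B_{\delta(t)}$ borrowed from Grenier, and an analytic stability/Gronwall estimate (together with $B_{\delta_2}\hookrightarrow H^s$) for the $\e\to0$ convergence. The only small point to keep in mind is that the Gronwall comparison must also account for the difference in initial data $(\rho^i_{0,\e}-\rho^i_0,\,v^i_{0,\e}-v^i_0)$, which vanishes since distributional convergence together with uniform $B_{\delta_0}$ bounds upgrades to strong convergence in $B_{\delta_1}$ for $\delta_1<\delta_0$.
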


\begin{cor}
\label{cor:2.1}
With the same assumptions and notation as in the Proposition \ref{grenier2}, for all $t  \in [0,T]$
we have
\be
\label{W1-0-again}
W_1(g_\e (t), g(t)) \to_{\e\to0} 0,
\ee
where
\be
\label{g2}
g_\e(t,x,v) = \sum _{i \in \{ 1, \cdots , N\}}  \rho_\e^i(t,x) \delta_{v= v_\e^i(t,x)}, \quad g(t,x,v) = \sum_{i \in \{ 1, \cdots , N\}}  \rho^i(t,x) \delta_{v= v^i(t,x)}.
\ee
\end{cor}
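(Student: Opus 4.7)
The plan is to mimic the proof of Corollary \ref{cor:1} line by line, simply replacing the integration against $d\mu(\theta)$ on $\M = \mathbb{R}$ by the finite sum $\sum_{i=1}^N$. Since the structure is identical, no new idea is required; the main point is to check that the $L^\infty$ convergence supplied by Proposition \ref{grenier2} (via Sobolev embedding of $H^s(\bt)$ into $L^\infty(\bt)$ for $s$ large enough) is strong enough to kill every error term.

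First, I would use the Kantorovich--Rubinstein dual formulation to write
\[
W_1(g_\e(t),g(t)) = \sup_{\|\varphi\|_{\mathrm{Lip}}\leq 1} \int_\bt \sum_{i=1}^N \bigl[ \rho_\e^i(t,x)\varphi(x,v_\e^i(t,x)) - \rho^i(t,x)\varphi(x,v^i(t,x))\bigr] dx,
\]
and split the integrand into the two pieces
\[
\rho_\e^i(t,x)\bigl[\varphi(x,v_\e^i(t,x))-\varphi(x,v^i(t,x))\bigr] + \bigl[\rho_\e^i(t,x)-\rho^i(t,x)\bigr]\varphi(x,v^i(t,x)).
\]
The first piece is bounded, using the Lipschitz property of $\varphi$ and the uniform bound on $\rho_\e^i$, by $\bigl(\sup_{\e,i}\|\rho_\e^i\|_\infty\bigr)\sum_{i=1}^N \|v_\e^i(t,\cdot)-v^i(t,\cdot)\|_\infty$, which tends to $0$ by Proposition \ref{grenier2} combined with Sobolev embedding.

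For the second piece, the difficulty is that $\varphi$ is only Lipschitz, hence a priori only grows linearly, so $\varphi(x,v^i(t,x))$ is bounded by a constant depending on $\sup_i\|v^i(t,\cdot)\|_\infty$ plus $\|\varphi(0,0)\|$, which is not directly controlled by $\|\varphi\|_{\mathrm{Lip}}$. The standard trick, exactly as at the end of the proof of Corollary \ref{cor:1}, is to subtract $\varphi(0,0)$: one writes $\varphi(x,v^i(t,x)) = \varphi(0,0) + \bigl[\varphi(x,v^i(t,x))-\varphi(0,0)\bigr]$, where the bracketed term has sup norm bounded by $1/2 + \sup_i\|v^i\|_\infty$ and the constant $\varphi(0,0)$ can be factored out of the sum. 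The $\varphi(0,0)$ contribution is then
\[
\varphi(0,0)\int_\bt \sum_{i=1}^N \bigl[\rho_\e^i(t,x)-\rho^i(t,x)\bigr] dx,
\]
which vanishes identically because both $\sum_i \rho_\e^i$ and $\sum_i \rho^i$ have total mass $1$ (conservation of mass along the continuity equations in \eqref{eq:Ndirac} and \eqref{eq:Ndiraclimit}, given by the assumption on $f_{0,\e}$ and the hypothesis $\|\sum_i \rho^i_{0,\e}-1\|_{B_{\delta_0}}<\eta$ passed to the limit).

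The remaining part of the second piece is controlled by $\bigl(1/2+\sup_{i,t}\|v^i(t,\cdot)\|_\infty\bigr)\sum_{i=1}^N \|\rho_\e^i(t,\cdot)-\rho^i(t,\cdot)\|_\infty$, which again tends to $0$ uniformly in $t\in[0,T]$ by Proposition \ref{grenier2}. Taking the supremum in $t\in[0,T]$ and collecting the three vanishing contributions yields the desired convergence \eqref{W1-0-again}. I do not expect any real obstacle here; the only thing to be slightly careful about is the $\varphi(0,0)$ trick, which is needed precisely because the $1$-Lipschitz test functions are not a priori uniformly bounded.
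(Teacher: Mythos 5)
Your proof is correct and matches essentially exactly the paper's argument. The paper states Corollary \ref{cor:2.1} without a separate proof because it is the verbatim analogue of Corollary \ref{cor:1}, whose proof you reproduce: the dual formulation of $W_1$, the two-term splitting, the subtraction of $\varphi(0,0)$ to compensate for the mere Lipschitz control of test functions, the cancellation of the $\varphi(0,0)$ contribution by conservation of total mass, and the $L^\infty$ convergence of $(\rho_\e^i,v_\e^i)$ from Proposition \ref{grenier2} via Sobolev embedding; the only change is replacing $\int_\M \cdot\, d\mu(\theta)$ by $\sum_{i=1}^N$.
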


\subsection{End of the proof of Theorem \ref{thm1} and Theorem \ref{thm2}}

We are now in position to conclude. 
Let $(f_\e)$ a sequence of global weak solutions to \eqref{vpme-quasi} with initial conditions $(f_{0,\e})$ (obtained thanks to Theorem~\ref{thm-exi}).

We denote by $(g_\e)$ the sequence of weak solutions to \eqref{vpme-quasi} with initial conditions $(g_{0,\e})$, defined by \eqref{g1} for the case of Theorem~\ref{thm1} and \eqref{g2} for the case of Theorem~\ref{thm2}. Using the triangle inequality, we have
\begin{equation*}
W_1(f_\e(t), g(t)) \leq W_1(f_\e(t), g_\e (t)) + W_1(g_\e(t), g(t)),
\end{equation*}
where $g$ is defined by \eqref{g1} for the case of Theorem~\ref{thm1} and \eqref{g2} for the case of Theorem~\ref{thm2}.

For the first term, we use Theorem \ref{thm4} to get
\begin{align*}
W_1(f_\e(t), g_\e (t)) &\leq W_1(g_{0,\e} + h_{0,\e}, g_{0,\e} )   \frac{1}{\e} e^{\frac1\e \left[(1 +3/\e^2 )t +(8+ \frac{1}{\e^2} e^{15/(2\e^2)} \Black) \int_0^t\| \rho_\e(\tau)\|_\infty\,d\tau\right]}  \\
&= W_1(h_{0,\e}, 0)   \frac{1}{\e} e^{\frac1\e \left[(1 +3/\e^2 )t +(8+ \frac{1}{\e^2} e^{15/(2\e^2)} \Black) \int_0^t\| \rho_\e(\tau)\|_\infty\,d\tau\right]},%\\
%&\to_{\e \to 0} 0,
\end{align*}
where $\rho_\e$ is here the local density associated to $g_\e$. By Proposition \ref{grenier} (for the case of Theorem \ref{thm1}) and Proposition \ref{grenier2} (for the case of Theorem \ref{thm2}), there exists $C_0>0$ such that for all $\e \in (0,1)$, 
$$
\sup_{\tau \in [0,T]}  \| \rho_\e(\tau)\|_\infty \leq C_0.
$$
Consequently,  we observe that taking
$$
\varphi(\e)= \frac{1}{\e} \exp \left( \frac{\lambda}{\e^3}  \exp \frac{15}{2 \e^2}\right),
$$
with $\lambda<0$, we have, by assumption on $h_{0,\e}$ (take a smaller $T$ if necessary) that
$$
\sup_{t \in [0,T]} W_1(f_\e(t), g_\e (t))  \to_{\e \to 0} 0.
$$
We also get that $W_1(g_\e(t), g(t))$ converges to $0$, applying Corollary \ref{cor:1} for the case of Theorem \ref{thm1}, and Corollary \ref{cor:2.1} for the case of Theorem \ref{thm2}.

This concludes the proofs of Theorems \ref{thm1} and \ref{thm2}.

\section{Proof of Proposition \ref{thm3}}%Instability results}
\label{sec:insta}

We now discuss a non-derivation result which was first stated by Grenier in the note \cite{Gr99}, and then studied in more details  by the first author and Hauray in \cite{HKH} (the latter was stated for a general class of homogeneous data, i.e., independent of the position).
In \cite{HKH} such results are given either for the classical Vlasov-Poisson system or for the linearized (VPME) system, but the proofs can be adapted to (VPME).

We first recall two definitions from \cite{HKH}.

\begin{defi} 
\label{def:Penmod}
We say that a homogeneous profile $\mu(v)$ with $\int \mu \, dv=1$ satisfies the Penrose instability 
criterion if there exists a local minimum point $\bar v$ of $\mu$ such that the following inequality holds:
\begin{equation} \label{def:Pen-first-mod}
\int_{\br} \frac{ \mu(v) - \mu(\bar v)} {(v-\bar v)^2} \, dv >   1.
\end{equation}
If the local minimum is flat, i.e., is reached on an interval $[\bar v_1, \bar v_2]$, then~\eqref{def:Pen-first-mod} has to be satisfied for all $\bar v \in [\bar v_1, \bar v_2]$.  
\end{defi}

\begin{defi}
\label{cond-pos}
 We say that a positive and  $C^1$ profile $\mu(v)$ satisfies the $\delta$-condition\footnote{The appellation is taken from \cite{HKH}. }
if
\begin{equation} \label{cond:alpha}
\sup_{v \in \br}  \,  \frac{|\mu'(v)|}{(1+ |v|)\mu(v)} < + \infty.
\end{equation}
\end{defi}

We can now state the theorem taken from \cite{HKH}.
\begin{thm} 
\label{thmGrenier-revisited}
Let $\mu(v)$ be a smooth profile satisfying the Penrose instability criterion.  Assume that $\mu$ is positive and satisfies the $\delta$-condition\footnote{It is also possible to consider a non-negative $\mu$ but the relevant condition is rather involved, we refer to \cite{HKH} for details.}. For any $N>0$ and $s>0$, there exists a 
sequence of non-negative initial data $(f_{0,\e})$ such that 
$$
\| f_{\e,0}- \mu\|_{W^{s,1}_{x,v}} \leq \e^N,
$$
and denoting by $(f_\e)$ the sequence of solutions
to \eqref{vpme-quasi} with initial data $(f_{0,\e})$, the following holds:

\begin{enumerate}
\item {\bf $L^1$ instability for the macroscopic observables:} consider the density $\rho_\e := \int f_\e \, dv$ and the electric field $E_\e = - \pt_x U_\e$. For all $\alpha\in [0,1)$, we have
\begin{equation}
\label{insta:macro}
\liminf_{\e \rightarrow 0} \sup_{t \in [0,\e^\alpha]} \left\| \rho_\e(t) - 1 \right\|_{L^1_{x}} >0,
\qquad
\liminf_{\e \rightarrow 0} \sup_{t \in [0,\e^\alpha]} {\e}\left\| E_\e \right\|_{L^1_x} >0.
\end{equation}

\item {\bf Full instability for the distribution function:} for any $r \in \bz$, we have
\begin{equation}
\label{insta:full}
\liminf_{\e \rightarrow 0} \sup_{t \in [0,\e^\alpha]} \left\| f_\e(t) - \mu \right\|_{W^{r,1}_{x,v}} >0.
\end{equation}
\end{enumerate}
\end{thm}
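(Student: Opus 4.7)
The argument I would carry out closely follows the strategy of \cite{HKH} for the linearized (VPME) system, with the extra technical ingredient of controlling the fully nonlinear exponential term $e^{U}$ in the Poisson equation. The starting observation is that when one linearizes the Poisson equation of (VPME)$_\e$ around the homogeneous equilibrium $\mu$ (for which the associated potential is $U\equiv 0$), the expansion $e^U = 1 + U + O(U^2)$ produces exactly the linearized (VPME) Poisson equation $\e^2 U'' = U - (\rho - 1)$ treated in \cite{HKH}; the Penrose criterion of Definition~\ref{def:Penmod} and the $\delta$-condition of Definition~\ref{cond-pos} are therefore the same, and the linear instability analysis transfers directly.

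First, I would use the Penrose criterion to select a wavenumber $k_0 \in \bz\setminus\{0\}$ and $\gamma \in \mathbb C$ with $\Re\gamma > 0$ such that the linearized VPME equation around $\mu$ admits a mode of the form
$$\tilde g(t,x,v) = e^{\gamma t /\e}\, e^{i k_0 x}\, F(v),$$
where the factor $1/\e$ reflects the plasma frequency scaling built into \eqref{vpme-quasi}, and $F(v)$ solves the dispersion equation associated with $k_0$ (its precise form depends on whether the local minimum of $\mu$ is strict or flat, cf.\ \cite{HKH}). Next, I would construct an approximate solution
$$f_\e^{\mathrm{app}}(t,x,v) = \mu(v) + \sum_{j=1}^{M} \e^{jN}\, f^{(j)}(t/\e,x,v)$$
for $M$ large enough, where $f^{(1)}$ carries the unstable mode $\tilde g$ and the higher order $f^{(j)}$ are WKB-type correctors that cancel the lower-order nonlinear contributions (including those arising from expanding $e^U$ to all orders in the amplitude). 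By construction $\|f_\e^{\mathrm{app}}(0)-\mu\|_{W^{s,1}_{x,v}} \lesssim \e^N$, and positivity of the initial datum is ensured by $\mu>0$ and the $\delta$-condition together with the smallness of the perturbation. The amplitude of the unstable mode at time $\e^\alpha$ is $\e^N \exp(\Re\gamma\,\e^{\alpha - 1})$, which, since $\alpha < 1$, becomes $O(1)$ at some time $t_\e \sim \e |\log \e| \ll \e^\alpha$.

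The final step is a nonlinear stability estimate: writing $r_\e := f_\e - f_\e^{\mathrm{app}}$ and the associated potential $V_\e := U_\e - U_\e^{\mathrm{app}}$, one obtains an evolution equation whose consistency error is of size $\e^{(M+1)N}$ (from having truncated the expansion) and whose nonlinear term comes from the identity
$$e^{U_\e^{\mathrm{app}}+V_\e} - e^{U_\e^{\mathrm{app}}} - e^{U_\e^{\mathrm{app}}}V_\e = O(V_\e^2),$$
valid as long as $U_\e^{\mathrm{app}}$ and $V_\e$ remain bounded in $L^\infty$. A standard Gronwall argument, combined with the bound on the approximate potential coming from the analogue of Lemma~\ref{lem:hatU}, then gives $\|r_\e(t)\|_{W^{-1,1}} \ll \e^N e^{\Re\gamma\, t/\e}$ throughout $[0,t_\e]$, so that the unstable mode dominates. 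Macroscopic instability \eqref{insta:macro} then follows by reading off the $k_0$-Fourier coefficient of $\rho_\e$ and $E_\e$ at time $t_\e$, while \eqref{insta:full} follows because the unstable mode has non-trivial $W^{r,1}_{x,v}$ norm uniformly in $\e$ for any fixed $r \in \bz$, the correctors are of higher order in $\e^N$, and $r_\e$ is controlled in a weak norm.

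The main obstacle is the nonlinear control of the exponential term, which, unlike the classical or linearized VPME cases, couples all Fourier modes of the density to $U_\e$ through the map $\rho \mapsto U(\rho)$ defined implicitly by $\e^2 U'' = e^U - \rho$. The key observation that makes the argument still go through is that this map is analytic and perturbatively close to the identity when $U$ is small, so that the cascade of correctors $f^{(j)}$ can be constructed iteratively, each $f^{(j)}$ being a finite trigonometric polynomial in $x$ whose coefficients solve \emph{linear} transport equations driven by the previously constructed $f^{(1)},\dots,f^{(j-1)}$. Once this hierarchy is set up, the error analysis is identical in spirit to the one in \cite{HKH}.
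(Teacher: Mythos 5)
The paper does not actually prove Theorem~\ref{thmGrenier-revisited}: it is stated as a result ``taken from \cite{HKH},'' accompanied only by the remark that the proofs in that reference (written there for the classical and linearized Vlasov--Poisson systems) ``can be adapted to (VPME).'' The paper then uses the theorem as a black box in Section~\ref{sec:insta} to deduce Proposition~\ref{thm3} via a Sobolev embedding argument. So there is no in-paper proof to compare against line by line.

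That said, your sketch is a plausible and reasonably faithful reconstruction of the Grenier-style instability argument that \cite{HKH} (following \cite{Gr99}) carries out, and the adaptations you flag are the right ones: the linearization $e^U = 1 + U + O(U^2)$ around the trivial potential reduces the linearized Poisson equation to the one treated in \cite{HKH}, so the Penrose criterion and the unstable growing mode with rate $\sim 1/\e$ are unchanged; the $\delta$-condition is what guarantees the perturbed initial datum stays non-negative; the cascade of WKB correctors with amplitudes controlled by powers of $\delta := \e^N e^{\Re\gamma\, t/\e}$ and a Gronwall estimate on the remainder up to the time where $\delta \sim 1$ (which is $\sim \e\,|\log\e| \ll \e^\alpha$) is exactly the standard mechanism. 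A small inaccuracy: the correctors $f^{(j)}$ do not solve \emph{pure} transport equations but linearized Vlasov equations (transport coupled to the linearized field operator) with source terms built from $f^{(1)},\dots,f^{(j-1)}$; this distinction matters when you verify the coercivity/smoothing properties needed for the Gronwall step. You also do not spell out that the nonlinear control on the remainder must be carried in a weak enough norm (the $W^{-1,1}$ you name, or weaker) so that the loss of derivative coming from the quadratic term does not destroy the closed estimate---this is the main technical subtlety of \cite{HKH}, and for (VPME) the extra analytic map $\rho \mapsto U(\rho)$ you identify must be shown to preserve that weak-norm structure. None of this amounts to a gap in principle; it simply means your sketch supplies more content than the paper does (which merely cites), while leaving the genuinely hard estimates implicit.
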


We deduce a proof of Proposition \ref{thm3} from this result. Indeed, take a smooth $\mu$ satisfying the assumptions of Theorem \ref{thmGrenier-revisited}, and consider the sequence of initial conditions $(f_{0,\e})$ given by this theorem.

By the Sobolev imbedding theorem in dimension $1$, the space $W^{2,1}(\bt \times \br)$ is continuously imbedded in the space $W^{1,\infty}(\bt \times \br)$ (i.e., bounded Lipschitz functions), hence  there exists a constant $C>0$ such that, for all $\e \in (0,1)$,
\begin{align*}
W_1(f_\e, \mu) &=  \sup_{\|\varphi\|_{\text{Lip}} \leq 1} \langle f_\e -\mu, \, \varphi \rangle  \\
&\geq \sup_{\|\varphi\|_{W^{2,1}} \leq C} \langle f_\e -\mu, \, \varphi \rangle \\
&= C \|f_\e - \mu\|_{W^{-2,1}}.
\end{align*}
Therefore, by \eqref{insta:full} with $r=-2$, we deduce that 
$$
\liminf_{\e \rightarrow 0} \sup_{t \in [0,\e^\alpha]} W_1(f_\e, \mu) >0,
$$
which proves the claimed result.

\section{Proof of Proposition \ref{thm3-2}}
\label{sec:sta}

As we already mentioned in the introduction, in the case of one single Dirac mass in velocity, the situation is much more favorable. This was first shown by Brenier in \cite{Br00} for the quasineutral limit of the classical Vlasov-Poisson system, using the so-called relative entropy (or modulated energy) method. It was then adapted by the first author in \cite{HK} for the quasineutral limit of (VPME).

In this case, the expected limit is the Dirac mass in velocity
$$
f(t,x,v)= \rho(t,x,v) \delta_{v= u(t,x)}
$$
which is a weak solution of \eqref{formal} whenever $(\rho, u)$ is a strong solution to the isothermal Euler system
\begin{equation}
\label{IE}
\left\{ \begin{array}{ccc}\pt_t \rho+ \pt_x (\rho u)=0,  \\
\pt_t u + u \pt_x u + \frac{\pt_x  \rho}{\rho} = 0, \\
\rho\vert_{t=0}= \rho_{0}, v\vert_{t=0}= u_{0}.
\end{array} \right.
\end{equation}
This is a hyperbolic and symmetric system, that admits local smooth solutions for smooth initial data (in this section, smooth means $H^s$ with $s$ larger than $2$).
From \cite{HK} we deduce the following stability result\footnote{In \cite{HK}, computations are done for the model posed on $\br^3$, but the same holds for the model set on $\bt$.}.
\begin{thm}
\label{relative}
Let $\rho_0>0, u_0$ be some smooth initial conditions for \eqref{IE}, and $\rho, u$ the associated strong solutions of \eqref{IE} defined on some interval of time $[0,T]$, where $T>0$.
Let $f_\e$ be a non-negative global weak solution of~\eqref{vpme-quasi} such that $f_\e \in L^1 \cap L^\infty$, $\int f_\e \, dx dv=1$,  and with uniformly bounded energy, i.e., there exists $A>0$, such that for all $\e \in (0,1)$,
$$
\mathcal{E}_\e(t):= \frac{1}{2}\int f_\e\vert v\vert ^2 dv \,dx + \int \left(e^{U_\e} \log  e^{U_\e}   - e^{U_\e} +1\right)\,dx + \frac{\e^2}{2} \int \vert U'_\e \vert^2 dx \leq A.
$$
For all $\e \in (0,1)$, define the relative entropy
$$
\mathcal{H}_\e(t):=\frac{1}{2}\int f_\e\vert v - u\vert ^2 dv \,dx + \int \left(e^{U_\e} \log \left( e^{U_\e}/ \rho \right)  - e^{U_\e} +\rho\right)\,dx + \frac{\e^2}{2} \int \vert U'_\e \vert^2 dx.
$$
Then there exists $C>0$ and a function $G_\e(t)$ satisfying $\| G_\e\|_{L^\infty([0,T])} \leq C \e$ such that, for all $t \in [0,T]$, 
$$
{\mathcal{H}}_\e(t) \leq   {\mathcal{H}}_\e(0) +  G_\e(t) +  C\int_0^t \Vert \pt_x u \Vert_{L^\infty} \mathcal{H}_\e(s) ds.
$$
In particular, if $ {\mathcal{H}}_\e(0)   \to_{\e \to 0} 0$, then $ {\mathcal{H}}_\e(t)   \to_{\e \to 0} 0$
 for all $t \in [0,T]$.

In addition, if there is $C_0>0$ such that ${\mathcal{H}}_\e(0)  \leq C_0 \e$, then there is $C_T>0$ such that $ {\mathcal{H}}_\e(t)  \leq C_T \e$ for all $t \in [0,T]$ and $\e \in (0,1)$.
\end{thm}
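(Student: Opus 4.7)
The plan is a modulated-energy (relative entropy) computation, following Brenier \cite{Br00} and adapted to the semi-linear Poisson setting of \eqref{vpme-quasi} as in \cite{HK}. Concretely I would differentiate $\mathcal{H}_\e(t)$ in time, use the Vlasov--Poisson equations satisfied by $(f_\e,U_\e)$ together with the isothermal Euler equations satisfied by $(\rho,u)$, and rearrange so that
$$
\frac{d}{dt}\mathcal{H}_\e(t) \;\leq\; G'_\e(t) + C\,\|\pt_x u(t)\|_{L^\infty}\,\mathcal{H}_\e(t),
$$
with $\|G_\e\|_{L^\infty([0,T])}\leq C\e$. Integration in time yields the main inequality, and the two concluding statements are immediate applications of Gronwall's lemma.

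The starting algebraic identity is
$$
\mathcal{H}_\e(t) = \mathcal{E}_\e(t) \;-\int f_\e\, v\, u\, dv\, dx \;+\;\tfrac{1}{2}\int \rho_\e\, u^2\, dx \;-\;\int e^{U_\e}\log\rho\, dx \;+\;\int \rho\, dx \;-\; 1,
$$
so $\frac{d}{dt}\mathcal{H}_\e$ reduces to the time derivative of the explicit correctors (the conservation $\frac{d}{dt}\mathcal{E}_\e \leq 0$ contributes with the right sign). Testing the Vlasov equation against $\varphi = vu(t,x)$ and $\varphi = u(t,x)^2/2$, using $\pt_t\rho_\e + \pt_x j_\e = 0$ with $j_\e := \int v f_\e\, dv$, and substituting $\pt_t u = -u\pt_x u -\pt_x\log\rho$ from \eqref{IE}, one obtains after direct rearrangement the kinetic identity
$$
\frac{d}{dt}\Bigl(\tfrac{1}{2}\int f_\e(v-u)^2\, dv\, dx\Bigr) = \int(j_\e-\rho_\e u)\,E_\e\, dx \;-\int f_\e(v-u)^2\,\pt_x u\, dv\, dx \;+\int(j_\e-\rho_\e u)\,\pt_x\log\rho\, dx,
$$
in which the middle term is immediately bounded by $\|\pt_x u\|_{L^\infty}\,\mathcal{H}_\e$, exactly the desired Gronwall contribution.

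The main obstacle is the potential part, due to the nonlinearity $e^{U_\e}$. I would differentiate the electron relative entropy together with the electric energy: using the Poisson equation $\e^2 U''_\e = e^{U_\e}-\rho_\e$ and integration by parts, the production $(e^{U_\e}-\rho_\e)\pt_t U_\e = \e^2\,U''_\e\,\pt_t U_\e$ exactly cancels $\tfrac{d}{dt}\bigl(\tfrac{\e^2}{2}|U'_\e|^2\bigr)$, and the remaining contribution, after substituting $\pt_t\log\rho = -u\pt_x\log\rho - \pt_x u$ from the continuity equation for $\rho$, becomes $\int(\rho_\e-e^{U_\e})(u\pt_x\log\rho + \pt_x u)\, dx$ plus a cross-term with $E_\e=-U'_\e$. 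Combining with the kinetic identity and integrating by parts through Poisson, the non-quadratic pieces reorganize into squares already bounded by $\|\pt_x u\|_{L^\infty}\,\mathcal{H}_\e$, plus residuals of the form $\e^2\int U'_\e\,\pt_x(\text{smooth function of }\rho,u)\, dx$ which, by Cauchy--Schwarz and the uniform bound $\e\,\|U'_\e\|_{L^2}\leq C\sqrt{A}$ coming from the energy control, are of size $O(\e)$; these are what define $G_\e$. The step I expect to be the most delicate is justifying these formal manipulations rigorously for the weak solutions built in Theorem \ref{thm-exi}, which requires either a mollification procedure or passing the identities through the $N$-particle approximation of Section \ref{sec:exi}. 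Once that is done, Gronwall's lemma closes the argument, and the concluding statements follow: $\mathcal{H}_\e(0)\to 0$ implies $\sup_{[0,T]}\mathcal{H}_\e(t)\to 0$, and $\mathcal{H}_\e(0)\leq C_0\e$ yields $\mathcal{H}_\e(t)\leq C_T\e$ with $C_T = (C_0+C)\exp\bigl(C\int_0^T\|\pt_x u\|_{L^\infty}\bigr)$.
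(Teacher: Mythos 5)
Your proposal is correct in approach, but note that the paper does not actually prove Theorem \ref{relative}: it imports it directly from \cite{HK} (the Brenier-type modulated-energy argument adapted to the semi-linear Poisson equation, with only a footnote observing that the computations on $\br^3$ transfer to $\bt$), and then devotes Section \ref{sec:sta} to deducing Corollary \ref{cor:2.2} from it. Your sketch reconstructs exactly that cited argument --- the algebraic splitting of $\mathcal{H}_\e$, the kinetic identity producing the $\|\pt_x u\|_{L^\infty}\mathcal{H}_\e$ Gronwall term, the cancellation of the field energy through $\e^2 U_\e''=e^{U_\e}-\rho_\e$, and the $O(\e)$ residuals of the form $\e^2\int U_\e'\,\pt_x(\cdot)\,dx$ defining $G_\e$ --- so it is consistent with the proof the paper relies on, with the caveat that the reorganization of the coupling term $\int(j_\e-\rho_\e u)E_\e\,dx$ with the electron-entropy derivative is asserted rather than carried out and is precisely the computation done in \cite{HK}.
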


Notice that, by a convexity argument, one also deduces that $\rho_\e= \int f_\e \, dv \rightharpoonup \rho$ (and $e^{U_\e} \rightharpoonup \rho$ as well) and $j_\e = \int f_\e v \, dv\rightharpoonup \rho u$ in a weak-$\star$ sense (see \cite{HK}).

\bigskip

We can actually deduce the following corollary, which is a precise version of Proposition \ref{thm3-2}.
\begin{cor}
\label{cor:2.2}
With the same assumptions and notation as in the previous theorem, the following convergence results hold:
\begin{enumerate}
\item If $ {\mathcal{H}}_\e(0)   \to_{\e \to 0} 0$, then
$$
\sup_{t \in [0,T]} W_1(f_\e, \rho\, \delta_{v= u}) \to_{\e \to 0} 0.
$$
\item If ${\mathcal{H}}_\e(0)  \leq C_0 \e$, then there is $C_T'>0$ such that, for all $\e \in (0,1)$,
$$
\sup_{t \in [0,T]} W_1(f_\e, \rho \,\delta_{v= u}) \leq C'_T \sqrt{\e}.
$$
\end{enumerate}
\end{cor}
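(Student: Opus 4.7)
The plan is to establish a pointwise-in-time estimate of the form $W_1(f_\e(t), \rho(t)\delta_{v=u(t)}) \leq C_T \sqrt{\mathcal{H}_\e(t)}$ on $[0,T]$ and then combine it with the two alternatives on $\mathcal{H}_\e$ given by Theorem~\ref{relative}. By the Kantorovich--Rubinstein duality, I would pick a $1$-Lipschitz test function $\varphi$ and split
$$
\int \varphi\, f_\e \,dx\,dv - \int \varphi(x,u(x))\, \rho\, dx = \int [\varphi(x,v) - \varphi(x,u(x))]\, f_\e\,dx\,dv + \int \varphi(x,u(x))\,(\rho_\e - \rho)\,dx.
$$

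For the first term, the $1$-Lipschitz hypothesis on $\varphi$ bounds the integrand by $|v-u(x)|\, f_\e$, and then Cauchy--Schwarz together with $\int f_\e\,dx\,dv = 1$ gives an upper bound of $\sqrt{\int f_\e |v-u|^2\, dx\, dv} \leq \sqrt{2\mathcal{H}_\e}$. For the second term, I would further write $\rho_\e - \rho = (\rho_\e - e^{U_\e}) + (e^{U_\e} - \rho)$. The first summand is handled by the Poisson equation $\e^2 U_\e'' = e^{U_\e} - \rho_\e$ and an integration by parts: since $\varphi(\cdot, u(\cdot))$ is Lipschitz with constant at most $1 + \|u'\|_\infty$ and the energy term of $\mathcal{H}_\e$ gives $\|U'_\e\|_{L^2} \leq \sqrt{2\mathcal{H}_\e}/\e$, this summand contributes at most $C\, \e\, \sqrt{\mathcal{H}_\e}$ with $C$ depending on $\|u'\|_\infty$. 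For the second summand, one first subtracts from $\varphi$ a harmless constant (this does not alter the integral, since both measures have mass one) so that $\|\varphi(\cdot, u(\cdot))\|_\infty$ is bounded by a constant depending only on $\|u\|_\infty$; then, observing that $\int e^{U_\e}\,dx = \int \rho\,dx = 1$ (the former obtained by integrating the Poisson equation, the latter by mass conservation for the isothermal Euler system), the Csisz\'ar--Kullback--Pinsker inequality applied to the two probability densities $e^{U_\e}$ and $\rho$ yields $\|e^{U_\e} - \rho\|_{L^1} \leq \sqrt{2\mathcal{H}_\e}$.

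Combining these three estimates delivers the key bound $W_1(f_\e(t), \rho(t)\delta_{v=u(t)}) \leq C_T \sqrt{\mathcal{H}_\e(t)}$ uniformly in $t \in [0,T]$, where $C_T$ depends on $\|u\|_{W^{1,\infty}([0,T]\times \bt)}$. Part (1) of the corollary then follows at once from the qualitative statement $\mathcal{H}_\e(t) \to 0$ in Theorem~\ref{relative}; part (2) follows from the quantitative bound $\mathcal{H}_\e(t) \leq C_T'\e$, which gives $W_1 \leq C_T''\sqrt{\e}$. The step I expect to require the most care is the control of $e^{U_\e} - \rho$: since the assumed regularity on $f_\e$ is only $L^1 \cap L^\infty$ with finite energy, one cannot afford any high-regularity bound here, and the Pinsker-type inequality based on matched total masses seems to be the right tool. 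The integration by parts in the $\rho_\e - e^{U_\e}$ piece is justified by $U_\e \in H^1$ (from the energy bound) together with the Lipschitz regularity of $\varphi(\cdot, u(\cdot))$.
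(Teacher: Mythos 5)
Your proof is correct, and its overall architecture coincides with the paper's: the same split into $A_1 := \langle f_\e - \rho_\e\,\delta_{v=u},\varphi\rangle$ and $A_2 := \langle(\rho_\e-\rho)\,\delta_{v=u},\varphi\rangle$; the same Cauchy--Schwarz bound $|A_1|\le\sqrt{2\mathcal{H}_\e}$; the same use of the Poisson equation to split $\rho_\e - \rho = -\e^2 U_\e'' + (e^{U_\e}-\rho)$; and the same integration by parts to handle the $\e^2 U_\e''$ contribution. Where you genuinely diverge is in the treatment of the $e^{U_\e}-\rho$ piece. The paper factors $e^{U_\e}-\rho = (e^{U_\e/2}-\sqrt{\rho})(e^{U_\e/2}+\sqrt{\rho})$, applies Cauchy--Schwarz with weight $|\varphi(x,u)-\varphi(0,0)|$, uses the elementary pointwise inequality $(\sqrt{y}-\sqrt{x})^2 \le x\log(x/y)-x+y$ to control the first factor by $\mathcal{H}_\e$, and then bounds the second factor crudely via the uniform energy bound $\mathcal{E}_\e\le A$. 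You instead observe that $\int_{\bt} e^{U_\e}\,dx = \int_{\bt}\rho\,dx = 1$ (from integrating the Poisson equation, respectively mass conservation for isothermal Euler), so the entropy part of $\mathcal{H}_\e$ is exactly the Kullback--Leibler divergence $D(e^{U_\e}\|\rho)$, and Csisz\'ar--Kullback--Pinsker gives $\|e^{U_\e}-\rho\|_{L^1}\le\sqrt{2\mathcal{H}_\e}$ directly, after which a simple $L^\infty\times L^1$ H\"older estimate (with the correct normalization of $\varphi$) suffices. This is cleaner: it avoids the auxiliary square-root factorization and, more interestingly, removes the need for the uniform bound $A$ on $\mathcal{E}_\e$ in this part of the estimate -- you also replace $A$ by $\mathcal{H}_\e$ in the $\e^2 U_\e''$ term, so your final pointwise bound is the tighter $W_1(f_\e,\rho\,\delta_{v=u})\le C_T\sqrt{\mathcal{H}_\e}$ rather than the paper's $C_1\sqrt{\mathcal{H}_\e}+C_2\e$. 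Both variants feed equally well into Theorem~\ref{relative} to give the two claims of the corollary.
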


\begin{proof}
Recall that we denote $\rho_\e = \int f_\e \, dv$.
Let $\varphi$ such that $\|\varphi\|_{\text{Lip}} \leq 1$ and compute
\begin{align*}
\langle f_\e -  \rho\, \delta_{v= u}, \,  \varphi \rangle &= \langle f_\e -  \rho_\e\,  \delta_{v= u}, \,  \varphi \rangle + \langle (\rho_\e -  \rho )\,\delta_{v= u}, \,  \varphi \rangle \\
&=: A_1 + A_2.
\end{align*}
Using the bound $\|\varphi\|_{\text{Lip}} \leq 1$, the Cauchy-Schwarz inequality, the fact that $f_\e$ is non-negative and of total mass $1$, and the definition of $\mathcal{H}_\e(t)$, we have
\begin{align*}
|A_1| &= \left|\int_{\bt\times\br} f_\e(t,x,v) (\varphi(t,x,v)- \varphi(t,x,u(t,x)) \, dv dx \right|\\
&\leq \int_{\bt\times\br} f_\e(t,x,v) |\varphi(t,x,v)- \varphi(t,x,u(t,x))| \, dv dx \\
&\leq \int_{\bt\times\br} f_\e(t,x,v) |v- u(t,x)| \, dv dx \\
&\leq  \left(\int_{\bt\times\br} f_\e(t,x,v) |v- u(t,x)|^2 \, dv dx\right)^{1/2}  \left(\int_{\bt\times\br} f_\e(t,x,v)  \, dv dx\right)^{1/2} \\
&\leq \sqrt{2} \sqrt{\mathcal{H}_\e(t)}.
\end{align*}
Considering $A_2$, we first have
\begin{align*}
A_2 &= \int_{\bt} (\rho_\e(t,x)- \rho(t,x)) \varphi(x,u(t,x)) \, dx \\
&=  \int_{\bt} (\rho_\e(t,x)- \rho(t,x)) \big(\varphi(x,u(t,x)) - \varphi(0,0) \big) \, dx
\end{align*}
since the total mass is preserved (and equal to $1$). Furthermore, we use the Poisson equation
$$
 \rho_\e = e^{U_\e}- \e^2 U_\e'',
$$
 to rewrite $A_2$ as
 \begin{align*}
A_2 &=   \int_{\bt} (e^{U_\e}- \rho(t,x)) \left(\varphi(x,u(t,x)) - \varphi(0,0) \right) \, dx  \\
&\qquad- \e^2  \int U_\e''  \left(\varphi(x,u(t,x)) - \varphi(0,0) \right) \, dx \\
&=: A_2^1 + A_2^2.
\end{align*}
Let us start with $A_2^2$. By integration by parts, the Cauchy-Schwarz inequality, and the bound $\|\varphi\|_{\text{Lip}} \leq 1$, we have
\begin{align*}
|A_2^2| &= \e^2 \left| \int_{\bt} U_\e' [\pt_x \varphi(x,u(t,x)) + \pt_x  u(t,x) \pt_ v \varphi(x,u(t,x))]  \, dx \right| \\
&\leq \e [1+ \| \pt_x u\|_{\infty}]  \left( \e^2 \int_{\bt}  |U_\e'|^2  \, dx \right)^{1/2} \\
&\leq \e [1+ \| \pt_x u\|_{\infty}] \sqrt{\mathcal{E}_\e(t)} \\
&\leq \sqrt{A} [1+ \| \pt_x u\|_{\infty}] \e.
\end{align*}
For $A_2^1$, we shall use the classical inequality
$$
(\sqrt{y}- \sqrt{x})^2 \leq x \log (x/y) -x +y,
$$
for $x,y >0$, and proceed as follows:
\begin{align*}
|A_2^1| &= \left| \int_{\bt} \left(e^{U_\e}- \rho(t,x)\right) \left(\varphi(x,u(t,x)) - \varphi(0,0) \right) \, dx  \right| \\
&= \left| \int_{\bt}\left(e^{\frac{1}{2} U_\e}- \sqrt{\rho(t,x)}\right)\left(e^{\frac{1}{2} U_\e}+ \sqrt{\rho(t,x)}\right) \left(\varphi(x,u(t,x)) - \varphi(0,0) \right) \, dx  \right| \\
&\leq \left( \int_{\bt} \left(e^{\frac{1}{2} U_\e}- \sqrt{\rho(t,x)}\right)^2\left|\varphi(x,u(t,x)) - \varphi(0,0) \right| \, dx  \right)^{1/2} \\
&\qquad \times \left( \int_{\bt} \left(e^{\frac{1}{2} U_\e}+ \sqrt{\rho(t,x)}\right)^2\left|\varphi(x,u(t,x)) - \varphi(0,0) \right| \, dx  \right)^{1/2}. 
\end{align*}
We have
\begin{align*}
&\left( \int_{\bt} \left(e^{\frac{1}{2} U_\e}- \sqrt{\rho(t,x)}\right)^2\left|\varphi(x,u(t,x)) - \varphi(0,0) \right| \, dx  \right)\\
&\qquad \leq (1+ \|u\|_\infty) \int \left(e^{U_\e} \log \left( e^{U_\e}/ \rho \right)  - e^{U_\e} +\rho\right)\,dx \\
&\qquad  \leq (1+ \|u\|_\infty) \mathcal{H}_\e(t),
\end{align*}
and likewise we obtain the rough bound
\begin{align*}
&\left( \int_{\bt} \left(e^{\frac{1}{2} U_\e}+ \sqrt{\rho(t,x)}\right)^2\left|\varphi(x,u(t,x)) - \varphi(0,0) \right| \, dx  \right) \\
&\qquad \leq 2 \int_{\bt} \left(e^{\frac{1}{2} U_\e}- 1\right)^2\left|\varphi(x,u(t,x)) - \varphi(0,0) \right| \, dx   \\
&\qquad + 2 \int_{\bt} \left(\sqrt{\rho(t,x)}+1\right)^2\left|\varphi(x,u(t,x)) - \varphi(0,0) \right| \, dx    \\
&\qquad \leq 2 (1+ \|u\|_\infty)\left( \mathcal{E}_\e(t) +  \int_{\bt} \left(\sqrt{\rho(t,x)}+1\right)^2 \, dx\right) \\
 &\qquad \leq 2 (1+ \|u\|_\infty)\left( A +  \int_{\bt} \left(\sqrt{\rho(t,x)}+1\right)^2 \, dx\right).
\end{align*}
As a consequence, we get
\begin{equation*}
|A_2^2| \leq \sqrt{2} (1+\|u\|_\infty)\left( A +  \int_{\bt} \left(\sqrt{\rho(t,x)}-1\right)^2 \, dx\right)^{1/2} \sqrt{\mathcal{H}_\e(t)}.
\end{equation*}
Gathering all pieces together, we have shown
\begin{multline*}
\langle f_\e -  \rho \delta_{v= u}, \,  \varphi \rangle  \\
\leq 
 \sqrt{2} \left[1+ (1+\|u\|_\infty)\left( A +  \int_{\bt} \left(\sqrt{\rho(t,x)}+1\right)^2 \, dx\right)^{1/2}\right] \sqrt{\mathcal{H}_\e(t)} 
 + \sqrt{A} [1+ \| \pt_x u\|_{\infty}] \e,
\end{multline*}
which allows us to conclude the proof applying Theorem \ref{relative}.
\end{proof}

%\appendix

\bigskip

{\bf Acknowledgements.} The authors would like to thank Maxime Hauray for some helpful discussions about his paper \cite{HauX}.  We are also grateful to the anonymous referees for their remarks about this work, and for helping us to correct several typos. 
\bibliographystyle{plain}
\bibliography{weak-strong}

\end{document}